\documentclass[a4paper,12pt]{amsart}

\usepackage{amsmath,epsfig,amssymb,amsbsy,amscd,amsfonts,amstext,color,bm}

\usepackage[all]{xy}
\usepackage{bm}
\usepackage{comment}
\usepackage{enumerate}
\usepackage{circledsteps}
\usepackage{geometry}

\makeatletter
\@namedef{subjclassname@2020}{%
  \textup{2020} Mathematics Subject Classification}
\makeatother

\usepackage{tikz,epic,eso-pic}

\tikzset{v/.style={circle, draw, inner sep=2pt, minimum size=6pt, fill=white}}
\usetikzlibrary{patterns}

\theoremstyle{plain}
\newtheorem{theorem}{Theorem}[section]
\newtheorem*{theorem*}{Theorem}
\newtheorem*{theoremA*}{Theorem A}
\newtheorem*{theoremB*}{Theorem B}
\newtheorem{corollary}[theorem]{Corollary}
\newtheorem{lemma}[theorem]{Lemma}

\theoremstyle{definition}
\newtheorem{definition}[theorem]{Definition}
\newtheorem{remark}[theorem]{Remark}

\newtheorem{proposition}[theorem]{Proposition}

\newcommand{\Int}{\operatorname{Int}}

\newcommand{\Hom}{\operatorname{Hom}}

\newcommand{\Sing}{\operatorname{Sing}}

\newcommand{\rank}{\operatorname{rank}}

\newcommand{\codim}{\operatorname{codim}}

\newcommand{\bd}{\partial}

\newcommand{\bC}{\mathbb{C}}

\newcommand{\bR}{\mathbb{R}}

\newcommand{\bZ}{\mathbb{Z}}

\newcommand{\cA}{\mathcal{A}}
\newcommand{\cB}{\mathcal{B}}
\newcommand{\cC}{\mathcal{C}}

\def\qed{\hfill $\Box$}

\title{Topological line arrangements and their topological invariants}
\author{Sakumi Sugawara}
\date{\today}
\address{Department of Mathematics, Faculty of Science, Hokkaido University, North 10, West 8, Kita-ku, Sapporo 060-0810, JAPAN. }
\email{sugawaras@math.sci.hokudai.ac.jp}
\subjclass[2020]{52C35, 32S22, 55N45, 57K45}
\keywords{Topological line arrangements, Orlik-Solomon algebra, minimal CW complex, symplectic line arrangements, 2-knots}

\begin{document}
\begin{abstract}
A topological line arrangement is an arrangement of embedded spheres in the complex projective plane that topologically generalizes a complex line arrangement. 
In this paper, we establish foundational results on the topology of the complement of topological line arrangements.
First, we prove that the cohomology ring of the complement is isomorphic to the Orlik-Solomon algebra, as for classical complex line arrangements. 
We then study the homotopy type of the complement.
We prove that the complement of a symplectic line arrangement has the homotopy type of a minimal CW complex. In contrast, every combinatorial type realizable by a topological line arrangement admits a realization with a non-minimal complement. Moreover, every such combinatorial type admits infinitely many realizations whose complements are pairwise non-homotopy equivalent.
\end{abstract}

\maketitle

%\renewcommand{\thefootnote}{\fnsymbol{footnote}}
%\footnote[0]{MSC Classification: 32S50, 32Q55, 57K40, 57R65}
%32S50: Topological aspects of complex singularities: Lefschetz theorems, topological classification, invariants
%32Q55: Topological aspects of complex manifolds
%57K40: General topology of 4-manifolds
%57R65: Surgeries and handlebodies
%\footnote[0]{Keywords:}
%\renewcommand{\thefootnote}{\arabic{footnote}}

\section{Introduction}
A \textit{hyperplane arrangement} is a finite collection of hyperplanes in a linear, affine, or projective space. Hyperplane arrangements have been studied from many different viewpoints, including algebraic geometry, combinatorics, and topology \cite{orl-ter}. 
From a topological viewpoint, one of the central problems is to understand to what extent the combinatorics of the arrangement determines the topology of the complement. 
One of the most fundamental results in this direction is the combinatorial description of the cohomology ring of the complement.
This line of work began with Arnol'd's computation for the braid arrangement complement \cite{arn}.
Brieskorn then showed that, for an arbitrary complex hyperplane arrangement, the cohomology ring is generated by logarithmic differential forms associated with defining linear equations \cite{bri}. 
Finally, Orlik and Solomon gave a purely combinatorial presentation of the cohomology rings in terms of the intersection lattice of the arrangement \cite{orl-sol}. 
Since then, Orlik-Solomon type descriptions have been extended to several broader classes of arrangements, 
including subspace arrangements \cite{gor-mac, fei-zie, del-sch}, toric arrangements \cite{cal-del, cddmp}, and abelian arrangements \cite{bib, moc-pag, bpp}. 

Moreover, complex hyperplane arrangement complements have a distinguished topological property: \textit{minimality}. 
A finite CW complex is called \textit{minimal} if the number of $k$-cells equals the $k$-th Betti number for each $k \geq 0$.
Dimca-Papadima and Randell proved independently that the complement of a complex hyperplane arrangement has the homotopy type of a minimal CW complex \cite{dim-pap, ran-mor}.
This is a particularly notable feature of arrangement complements and does not hold for general complex hypersurface complements.
Minimality has also been extended beyond hyperplane arrangements, for instance to certain subspace arrangements \cite{adi-min}  
and toric arrangements \cite{ant-del}.

The purpose of this paper is to investigate the extent to which these classical topological properties of complex line arrangements extend to a more flexible object: \textit{topological line arrangements}.

A topological line arrangement is a finite collection of locally flat embedded $2$-spheres in $\bC P^2$ such that any two spheres intersect transversally and positively in exactly one point. 
Thus, it is a topological analogue of a complex projective line arrangement.
The study of topological line arrangements was initiated by Ruberman and Starkston \cite{rub-sta}, who established various fundamental properties. 
It may also be viewed as a complex analogue of a pseudoline arrangement in the real projective plane. 
Indeed, Ruberman and Starkston proved that pseudoline arrangements can be complexified, in an appropriate sense, to produce topological line arrangements \cite[Theorem 1.3]{rub-sta}.
Thus the class of topological line arrangements is strictly larger than the class of complex line arrangements. 
However, not every rank-three matroid can be realized as a topological line arrangement \cite[Theorems 1.1 and 1.2]{rub-sta}. 
Topological line arrangements also arise in symplectic geometry, for instance in the work of Plamenevskaya and Starkston \cite{pla-sta}, and four-dimensional topological methods have recently been applied to their realization problems \cite{gol,ace-gol, AMP}.
There is also a related notion of \textit{$2$-pseudoarrangements} introduced by Bj\"orner and Ziegler \cite{bjo-zie}; in this paper we will compare this notion with topological line arrangements and show that, in a natural sense, topological line arrangements form a genuinely broader class (see Theorem \ref{thm:lifted}).

Our goal is to determine which classical topological properties of complex line arrangement complements remain valid for complements of topological line arrangements. Our first main result concerns the cohomology ring.

\begin{theorem}\label{thm:main_cohom}
Let $\cA$ be a topological line arrangement and let $U(\cA)$ be the complement. Then, the cohomology ring $H^{*}(U(\cA); \bZ)$ is isomorphic to the (projective) Orlik-Solomon algebra $A^{*} (\cA)$.
\end{theorem}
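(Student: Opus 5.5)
Our plan is to compute $H^*(U(\cA);\bZ)$ using Poincaré–Lefschetz duality on $\bC P^2$, which identifies the cohomology of $U=U(\cA)$ with homology of the union $L=\bigcup_{i} L_i$ of the spheres. We then pin down the cup product by a local computation near the intersection points, where the configuration is modelled on complex lines. Write $\cA=\{L_0,\dots,L_n\}$. Let $N$ be a closed regular neighbourhood of $L$; local flatness and the transversality hypothesis guarantee that one exists and that its structure is standard near the multiple points. Set $M=\bC P^2\setminus \Int N\simeq U$.

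\textbf{Step 1: additive structure.} Duality gives $H^k(U)\cong H_{4-k}(\bC P^2, L)$. We feed this into the long exact sequence of the pair. The space $L$ is a union of $n+1$ spheres glued at the multiple points, so $H_1(L)$ is free of rank $b_1$ of the incidence graph. That graph is bipartite, with the lines on one side and the multiple points on the other. We also have $H_2(L)=\bZ^{n+1}$, and the map $H_2(L)\to H_2(\bC P^2)=\bZ$ sends each $[L_i]$ to $1$. The degree is $1$ because $L_i\cdot L_j=1$ and $H_2(\bC P^2)\cong\bZ$, so $[L_i]=\pm[\bC P^1]$, with the sign fixed by positivity of the intersections. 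It follows that
\[
H^1(U)\cong\bZ^{n},\qquad H^2(U)\cong H_1(L)\cong \bZ^{b_1},\qquad H^{k}(U)=0 \ (k\ge 3),
\]
and all of these groups are torsion-free. An Euler characteristic count shows that $b_1=\sum_p(m_p-1)-n$, where $m_p$ is the multiplicity of the point $p$. This agrees with the rank of the projective Orlik–Solomon algebra $A^2(\cA)$.

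\textbf{Step 2: generators and the local structure.} For each pair $i\ge1$, take a class $e_i\in H^1(U)$ dual to a meridian $\mu_i$ of $L_i$, normalised against $L_0$. More precisely, $e_i$ is the class whose linking number with $L_j$ is $\delta_{ij}-\delta_{0j}$; it comes from a 3-chain in $\bC P^2$ bounded by $L_i-L_0$. For each multiple point $p$, a small $4$-ball $B_p$ around $p$ meets $\cA$ in $m_p$ cone disks on a link $K_p\subset S^3$. Transversality and positivity make this locally homeomorphic to a union of complex lines, so $K_p$ is the Hopf-type link of $m_p$ fibres. Hence $B_p\setminus\cA$ has the local Orlik–Solomon cohomology of a central arrangement of $m_p$ lines. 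We then show that $H^2(U)$ is generated by the images of the restrictions $H^2(U)\to\bigoplus_p H^2(B_p\setminus\cA)$, or dually that $H_2(U)$ is generated by the local tori $\mu_i\times\mu_j$ near each $p$ together with the relations coming from each $p$. This is the content of the duality $H^2(U)\cong H_1(L)$: loops in the incidence graph correspond to the local 2-cycles.

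\textbf{Step 3: the ring.} Define $E\to H^*(U)$ by $e_i\mapsto e_i$. The relations $e_i^2=0$ hold automatically. The Orlik–Solomon relations $\partial(e_ie_je_k)=0$ for $L_i,L_j,L_k$ through a common point are checked by restricting to $B_p\setminus\cA$, where they hold by the classical result for complex lines. For triples with no common point, $e_ie_j$ is detected only at the point $L_i\cap L_j$, and the relation follows there as well. This gives a surjection $A^*(\cA)\to H^*(U)$: on degree $1$ it is an isomorphism, and Step 2 shows that the products $e_ie_j$ span $H^2$. Since both sides are free of the same ranks by Step 1, the surjection is an isomorphism.

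\textbf{Main obstacle.} The delicate point is the product structure in Step 2/3. We must show that the cup product $e_i\cup e_j$ is computed by the local contribution at $L_i\cap L_j$, i.e.\ by intersecting the dual 3-chains (Seifert-type hypersurfaces) in $\bC P^2$ bounded by $L_i-L_0$ and $L_j-L_0$. We would handle this via the duality cup product $\leftrightarrow$ intersection of relative cycles. The sign control comes from positivity of the transverse intersections.
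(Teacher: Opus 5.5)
Step 1 is correct. Computing $H^*(U)$ directly as $H_{4-*}(\bC P^2,L)$ is a tidy alternative to the paper's computation of $H_*(U')$. Steps 2--3, however, have a genuine gap.

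\textbf{The local model is false.} The paper's definition imposes only \emph{pairwise} transversality and positivity. So at a point $p$ with $m_p\ge 3$, all you know is that the components of $K_p$ are unknotted and pairwise link $+1$. The link need not be the $m_p$-fibre Hopf link, and $B_p\cap\cA$ need not be homeomorphic to $m_p$ complex lines; the paper says this explicitly in a footnote. You therefore cannot invoke ``the classical result for complex lines'' at $p$, and your check of the Orlik--Solomon relations has no basis as written.

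The missing ingredient is that the cup product $H^1\times H^1\to H^2$ of a link exterior depends only on linking numbers. Take classes $\alpha_i$ dual to the meridians and a boundary torus $T_s$ oriented by $(\lambda_s,\mu_s)$, with $[\lambda_s]=\sum_{t\neq s}lk(K_t,K_s)[\mu_t]$. Then $\langle\alpha_i\cup\alpha_j,[T_s]\rangle=\alpha_i(\lambda_s)\delta_{j,s}-\alpha_j(\lambda_s)\delta_{i,s}$, and positivity makes every $lk$ equal to $+1$. This is Lemma~\ref{lem:cup}, and it is what must replace your local model.

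\textbf{Surjectivity over $\bZ$ is asserted, not proved.} You claim ``Step 2 shows that the products $e_ie_j$ span $H^2$.'' Local detection gives at most injectivity of a restriction map. With equal ranks, that only yields an image of finite index over $\bZ$. For integral surjectivity you need two things:
\begin{enumerate}
\item an explicit basis of $H_2(U)$, namely the nbc tori $T_{r,s}\subset\bd B_P$ for $P\notin L_0$;
\item the evaluation $\langle e_i\cup e_j,[T_{r,s}]\rangle=\delta_{(i,j),(r,s)}$ for $(i,j)\in\mathbf{nbc}(\cA)$.
\end{enumerate}

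\textbf{Detection must use only points off $L_0$.} Take three generic lines, and at the double point $q=L_0\cap L_1$ write $\epsilon_0,\epsilon_1$ for the local classes dual to its meridians. Your normalisation $e_i(\mu_0)=-1$ gives $e_1|_q=\epsilon_1-\epsilon_0$ and $e_2|_q=-\epsilon_0$. Hence $e_1e_2|_q=-\epsilon_1\epsilon_0\neq 0$, even though $q\neq L_1\cap L_2$. So ``$e_ie_j$ is detected only at $L_i\cap L_j$'' is false when all multiple points are used. Also, the relation $e_ie_j=0$ for $L_i\cap L_j\in L_0$, which belongs to the projective Orlik--Solomon algebra, is never derived.

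\textbf{How this connects to the paper.} Evaluating on the nbc tori via linking numbers settles all three cases of the multiplication table at once; this is the paper's second proof. The route in your ``main obstacle'' paragraph, intersecting dual $3$-chains bounded by pushed-off $L_i-L_0$, is the paper's first proof. There too the local sign comes only from $lk=+1$, not from a Hopf-link model, and you leave that argument uncarried out.
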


Thus, at the level of cohomology rings, topological line arrangements satisfy the same combinatorial description as complex line arrangements. In particular, the cohomology groups of $U(\cA)$ are torsion-free, and their Betti numbers are determined by the characteristic polynomial of the arrangement.
We give two topological proofs of Theorem \ref{thm:main_cohom}.
In the first proof, we use Poincar\'e-Lefschetz duality to translate the cup product into a computation of intersection products in relative homology.
We construct explicit homology cycles which play the role dual to the logarithmic forms in the classical theory and compute their intersection products. We then give a shorter, direct computation of the cup product. Using the nbc tori constructed from the local links, the cup product can be computed from the meridians, longitudes, and their linking numbers. 
In particular, the positivity assumption forces the relevant local linking numbers to be $+1$.
These computations give the desired isomorphism with the Orlik--Solomon algebra. The cycles in the first proof are inspired by those introduced in \cite{sug-bd} in the computation of the cohomology ring of boundary manifolds of combinatorial line arrangements.
Although a related philosophy appears in the work of de Longueville and Schultz on complex subspace arrangements \cite{del-sch}, the construction and the intersection computations in the present setting are different.

Our next results concern the homotopy type of the complement, especially minimality. We first prove a positive result for symplectic line arrangements.

\begin{theorem}\label{thm:main_symp}
The complement of a symplectic line arrangement is minimal.
\end{theorem}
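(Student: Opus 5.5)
We will prove minimality by reducing to the case of complex line arrangements and then invoking the theorem of Dimca--Papadima and Randell. The key input is that a symplectic line arrangement in $\bC P^2$ can be deformed, through symplectic line arrangements of the same combinatorial type, to a configuration that is standard near each multiple point. Two facts from symplectic geometry make this possible. First, by a local isotopy (as in Plamenevskaya--Starkston and Ruberman--Starkston), the spheres can be made to coincide with complex lines in Darboux balls around every intersection point. Second, by Gromov's theory of $J$-holomorphic curves, each sphere is then $J$-holomorphic for some almost complex structure $J$ tamed by the Fubini--Study form. The complement $U(\cA)$ changes only by an isotopy under these deformations, so its homotopy type is unchanged.

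The main step is to build a minimal cell structure directly, imitating Dimca--Papadima's Lefschetz-type argument. First choose a generic point $p$ on one sphere $L_0\in\cA$, away from all multiple points. Next consider the pencil of $J$-holomorphic spheres through $p$; this pencil exists by Gromov's theorem for $J$ tamed by $\omega_{FS}$. Restrict the pencil to the affine part $\bC P^2\setminus L_0\cong\bC^2$. This gives a singular fibration of $U(\cA)$ over $\bC$ minus finitely many points. The generic fiber is $\bC$ minus $n-1$ points, where $n=|\cA|$.

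The exceptional fibers are of two kinds. Some are the $J$-lines through $p$ and a multiple point. The others are lines tangent to some $L_i$. Positivity of intersections of $J$-holomorphic curves is what controls these fibers. Two distinct $J$-spheres of degree one meet in exactly one point, positively. Hence no fiber is tangent to a sphere $L_i$ when $L_i$ is itself a $J$-line, and the only special fibers are those through multiple points. Here we choose $J$ so that all $L_i$ are $J$-holomorphic simultaneously, which is possible after the local standardization.

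With this set up, $U(\cA)$ is obtained from the generic fiber (a wedge of $n-1$ circles) times the base (a wedge of circles, homotopically) by attaching cells. Near each special fiber, the local model is the product of a disk with the complement of a central complex line arrangement. The attaching data are then exactly those of a complex arrangement, so Randell's Morse-theoretic count applies. The resulting CW complex has $1$ zero-cell, $n-1$ one-cells, and $\sum_{P}(m_P-1)$ two-cells, where $m_P$ is the multiplicity of the multiple point $P$. These numbers equal the Betti numbers given by Theorem \ref{thm:main_cohom} through the Orlik--Solomon algebra.

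An alternative route to the same count is a Morse-function argument. Take $f=\sum_i \log|\ell_i|^2$, where the $\ell_i$ are local defining functions of the $J$-lines. Showing that $f$ can be perturbed to have exactly $b_k$ critical points of index $k$ would again give a CW complex with matching cell counts.

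The main obstacle is making the pencil argument rigorous in the almost-complex setting. Two things must be established. The first is that the projection from $p$ is a genuine (topologically locally trivial) fibration away from the special fibers; I would try to obtain this from the smooth dependence of Gromov's $J$-lines on their point constraints. The second is that near the special fibers the local topology matches the complex model, which follows from the local standardization. Once both are in place, the minimality count is identical to the complex case.
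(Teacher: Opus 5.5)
Your skeleton matches the paper's: a $J$-holomorphic pencil through a generic $p\in L_0$, with every line $J$-holomorphic so that positivity of intersections rules out tangencies. However, the step that actually produces a minimal CW complex is missing, and the mechanisms you offer for it do not work.

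First, there is no reduction to complex arrangements. Symplectic line arrangements realize combinatorial types with no complex realization (e.g.\ non-Pappus configurations), so Dimca--Papadima and Randell cannot be used as a black box. Nor is ``Randell's Morse-theoretic count'' a local statement about special fibers that can be imported once the local models are complex. Both Randell's and Dimca--Papadima's arguments are global, relying on Lefschetz/Morse theory of the complex algebraic complement. The same objection applies to your alternative route: $J$-lines have no global defining functions $\ell_i$, and making $f$ perfect is the whole problem.

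Second, the assembly you sketch is wrong as stated. The total space over the punctured base is a twisted bundle, not a product. Even a homotopy product $F\times B$ would carry $s$ extra $1$-cells and $(n-1)s$ extra $2$-cells, where $s$ is the number of special values, and you never cancel them. Likewise, a neighborhood of a special fiber is not a product of a disk with a central arrangement complement.

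The missing ingredient is a Libgober-type theorem, which is what the paper uses: the complement is homotopy equivalent to the presentation $2$-complex of the braid monodromy presentation. Concretely:
\begin{enumerate}[(a)]
\item $U(\cA)$ retracts onto the preimage of a disk containing all special values.
\item Over a small disk around the special value of a multiple point $P\notin L_0$, the preimage is homotopy equivalent to $F\cup_{F\cap B_P}(B_P\setminus V(\cA))$.
\item $B_P\setminus V(\cA)\simeq S^1\times\bigvee_{m_P-1}S^1$ is obtained from the $m_P$-punctured disk $F\cap B_P$ by attaching $m_P-1$ cells of dimension $2$, with relators $[x_1\cdots x_{m_P},x_i]$ for $i<m_P$.
\item These pieces are glued along products $F\times[0,1]$ over paths.
\end{enumerate}
So $U(\cA)$ is a generic fiber $F\simeq\bigvee_{n-1}S^1$ (your convention $n=|\cA|$) with $m_P-1$ cells of dimension $2$ attached for each such $P$.

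The paper justifies this by noting that Libgober's proof uses only local triviality of the pencil off the special fibers and the $2$-complex model of the local link complement. Both remain valid for $J$-holomorphic lines. It then reads off $c_2=\chi(U)+c_1-1=b_2$ from the Euler characteristic.

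Finally, your $2$-cell count must range over $P\notin L_0$ only. As written, $\sum_{P\in\Sing(\cA)}(m_P-1)$ exceeds $b_2$ by $n-1$.
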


Symplectic line arrangements include complexifications of pseudoline arrangements, and hence they realize combinatorial types which cannot be realized by complex line arrangements, such as arrangements violating Pappus' theorem.
The proof uses braid monodromy, which is available not only for algebraic curves but also for symplectic curves \cite{moi-bra, coh-suc-braid, kha-kul}.
It is known that the $2$-dimensional CW complex associated to the presentation of the fundamental group obtained by the braid monodromy gives the homotopy type of the complement \cite{lib-hom}. 
Though Libgober's result describes the complement for only algebraic curves, we adapt his argument to the symplectic setting.
Applying this construction to symplectic line arrangements yields a minimal CW model.

In contrast, the situation changes drastically for arbitrary topological line arrangements.

\begin{theorem}\label{thm:main_nonmin}
Let $\cA$ be any topological line arrangement with at least two lines. Then, there exists a topological line arrangement $\cA'$ with the same combinatorial type as $\cA$ such that the complement $U(\cA')$ is not minimal. 
\end{theorem}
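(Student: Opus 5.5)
Starting from $\cA=\{L_1,\dots,L_n\}$, I will modify a single line by a local knotting operation that leaves every intersection point, and hence the combinatorial type, unchanged, but changes the fundamental group of the complement. Concretely, choose a small $4$-ball $B\subset \bC P^2$ meeting $L_1$ in an unknotted disk and disjoint from all other lines. Replace $(B,B\cap L_1)$ by $(B,D_K)$, where $D_K$ is a knotted ball pair coming from a nontrivial $2$-knot $K\subset S^4$, i.e.\ form the connected sum $L_1'=L_1\# K$ in $(\bC P^2,L_1)\#(S^4,K)$. Take the $2$-knot $K$ so that $\pi_1(S^4\setminus K)$ has nontrivial, computable structure; for instance, the spun trefoil, or a $2$-knot with $H_2$ of the universal abelian cover nonzero. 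The new collection $\cA'=\{L_1',L_2,\dots,L_n\}$ is again a collection of locally flat spheres. Intersections with the other lines are unchanged, so they are still transverse, positive and single. Thus $\cA'$ is a topological line arrangement with the same combinatorial type.

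Next I compare complements. By Theorem \ref{thm:main_cohom}, $H^*(U(\cA'))\cong A^*(\cA)\cong H^*(U(\cA))$, so the Betti numbers are unchanged. Minimality would therefore require a CW complex with exactly $b_k$ cells in each dimension, and $b_k=0$ for $k\ge 3$ since the Orlik--Solomon algebra of a line arrangement vanishes above degree $2$. Van Kampen along the splitting sphere $S^3\setminus(\text{unknot})\simeq S^1$ gives
\[
\pi_1(U(\cA'))\cong \pi_1(U(\cA)) *_{\bZ} \pi_1(S^4\setminus K),
\]
amalgamated along a meridian of $L_1$ and a meridian of $K$. A minimal complex has a single $0$-cell, $b_1=n-1$ one-cells and $b_2$ two-cells. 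It therefore yields a presentation of $\pi_1$ with $n-1$ generators and $b_2$ relators. In particular the deficiency would be $(n-1)-b_2$, and the group would have to abelianize freely.

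So I need an obstruction, which I take from the local system homology of the universal abelian cover, or equivalently from Alexander-type invariants. For a minimal $2$-complex $X$, the cellular chain complex of the universal abelian cover $\widetilde X$ is
\[
0\to \Lambda^{b_2}\to\Lambda^{n-1}\to \Lambda\to 0,\qquad \Lambda=\bZ[H_1],
\]
and this constrains the Alexander module, for example its Fitting ideals or its rank. Because $n\ge 2$, the meridian of $L_1$ is nontrivial in $H_1(U(\cA))$. Tensoring with a character that sends this meridian to $t$, the knot $K$ contributes its Alexander module $\bZ[t^{\pm1}]/(\Delta_K)$ as an extra summand to $H_1(\widetilde{U(\cA')})$, while the Euler characteristic is unchanged. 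I will choose $K$ so that this extra summand violates the bound. Concretely, I want $\Delta_K$ such that the twisted homology $H_1(U(\cA');\bC_\rho)$ is nonzero at a generic character $\rho$, whereas for a minimal complex with $\chi$ fixed and $H_0(\bC_\rho)=0$ the twisted Betti numbers must satisfy $b_1(\rho)\le b_1$. Alternatively, I could compare $b_1(\rho)$ at characters where $U(\cA)$ has jumps.

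The main obstacle is exactly this last step: finding an invariant that is forced by minimality and provably broken by the knotted summand. Euler characteristic and cohomology are invisible to the knotting, so the argument must see $\pi_1$ or twisted homology. My first attempt is a Morse-type inequality. A minimal complex gives, for every rank-one local system, $\dim H_1(X;\bC_\rho)\le b_1-1$ at nontrivial $\rho$. I would then show that connected sum with a $2$-knot whose Alexander polynomial has many roots, for example a connected sum of many spun trefoils, raises $\dim H_1(U(\cA');\bC_\rho)$ at a character $\rho$ with $\rho(\text{meridian of }L_1)$ a root. This works provided the restriction of $\rho$ to the meridian can be chosen freely, which holds since $n\ge2$. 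The count then exceeds $b_1-1$ for enough summands.
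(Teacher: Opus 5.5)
Your argument is correct. Your construction is the paper's: a knot connected sum of one line with a spun sum of trefoils. What differs is the obstruction.

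\textbf{The paper's route.} It works with the rank of the fundamental group. Proposition \ref{prop:min_fund} gives $d(\pi_1)=b_1$ for a minimal space. The group $\pi_1(U(\cA'))$ surjects onto $\pi_1(S^4\setminus F_n)$, by abelianizing the arrangement part and killing the other meridians. Weidmann's theorem then gives rank at least $n+1>b_1$.

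\textbf{Your route.} A minimal complex has one $0$-cell and $b_1$ $1$-cells, so $\dim_{\bC}H_1(X;\bC_\rho)\le b_1-1$ for every nontrivial rank-one $\rho$. You combine this with Mayer--Vietoris over the gluing region $S^3\setminus(\text{unknot})\simeq S^1$. Since $\rho$ is nontrivial there, $H_1(U(\cA');\bC_\rho)\cong H_1(U(\cA);\bC_\rho)\oplus H_1(S^4\setminus K;\bC_t)$. Because $H_1(X;\bC_\rho)$ depends only on $\pi_1$, your inequality is exactly the computable bound $d(G)\ge 1+\dim H_1(G;\bC_\rho)$. In effect you prove the rank estimate the paper needs by Fox calculus on the trefoil group, rather than citing Weidmann.

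\textbf{Comparison.} Your version is more elementary, self-contained and quantitative. The paper's version is more flexible: it works for arbitrary nontrivial knot summands. That includes knots with trivial Alexander module, whose effect is invisible to rank-one twisted homology.

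\textbf{One correction.} The relevant quantity is not the number of roots of $\Delta_K$. It is $\dim_{\bC}H_1(S^4\setminus K;\bC_t)$ at a \emph{single} $t$, i.e.\ the number of invariant factors of the Alexander module that vanish at $t$. If the knot contributed a cyclic summand $\bZ[t^{\pm1}]/(\Delta_K)$, as you wrote, the gain would be at most $1$ however many roots $\Delta_K$ has. That would in general not suffice once $b_1\ge 2$.

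The sum of $N$ spun trefoils works for a different reason. Its group has an $(N+1)$-generator, $N$-relator presentation whose Fox matrix is a multiple of $t^2-t+1$, so it vanishes at $t=e^{\pi\sqrt{-1}/3}$. Hence the Alexander module is $(\bZ[t^{\pm1}]/(t^2-t+1))^{N}$ and $\dim H_1=N$ at that $t$. Taking $N\ge b_1$ summands gives the contradiction.
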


%This shows that, unlike complex and symplectic line arrangement complements, complements of general topological line arrangements can have highly flexible homotopy types.

The construction is based on modifying a given topological line arrangement by taking a connected sum of one line with a knotted $2$-sphere ($2$-knot) in $S^4$. 
By choosing a $2$-knot whose knot group has sufficiently large rank, we prove that the resulting complement is not minimal. Such knots are provided in Lemma \ref{lem:2-knot}.
Moreover, by the same ``knot-connecting-sum construction'', we can prove the following stronger statement.

\begin{theorem}\label{thm:main_infty}
Let $\cA$ be any topological line arrangement with at least two lines. Then, there exist infinitely many topological line arrangements of the same combinatorial type as $\cA$, but the complements are pairwise non-homotopy equivalent.
\end{theorem}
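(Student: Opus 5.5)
Fix a line $L_1\in\cA$ and a family of $2$-knots $K_n\subset S^4$, and form $\cA_n$ by replacing $L_1$ with $L_1\# K_n$. The connected sum is taken in a small $4$-ball meeting $L_1$ in an unknotted disk and disjoint from the other lines. The sphere $L_1\# K_n$ is still locally flat and has the same homology class as $L_1$. It still meets every other line transversally and positively in the original single point, since the intersections are untouched. So $\cA_n$ is a topological line arrangement with the same combinatorial type as $\cA$.

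The complement decomposes along a $3$-sphere: $U(\cA_n)$ is obtained by gluing $U(\cA)$ minus a ball-pair to $S^4\setminus \nu K_n$ minus a ball-pair, along $S^3\setminus$ unknot $\simeq S^1$. By van Kampen,
\[
\pi_1(U(\cA_n))\cong \pi_1(U(\cA)) *_{\bZ} \pi_1(S^4\setminus K_n),
\]
amalgamated along the meridian of $L_1$ and a meridian of $K_n$. This is the same mechanism as in the proof of Theorem \ref{thm:main_nonmin}.

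To distinguish the $\cA_n$ up to homotopy, I would use an invariant of $\pi_1$ that is additive enough under this amalgamation. My first choice is the rank (minimal number of generators), using knots from Lemma \ref{lem:2-knot} with knot group of rank tending to infinity. A cleaner alternative is to count homomorphisms to a fixed finite group, or to use Alexander-type invariants. For instance, pick $K_n$ with knot groups $G_n$ having Alexander polynomials $\Delta_n$ that are pairwise distinct, such as spun or twist-spun knots of classical knots with distinct Alexander polynomials. Then compare the Alexander module of $\pi_1(U(\cA_n))$ with respect to a suitable abelianization map, or simply the number of homomorphisms to some finite group $F$.

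Concretely, fix a line $L_2\neq L_1$ (we have at least two lines). I would consider homomorphisms $\pi_1(U(\cA_n))\to G$ that send the meridian of $L_1$ to an element $g$ and the remaining meridians suitably. Such maps are exactly pairs of a map on $\pi_1(U(\cA))$ and a map on $G_n$ that agree on the meridian. Counting them for a well-chosen finite $G$ separates the $\cA_n$ once the counts $|\{\phi: G_n\to G,\ \phi(\mu)=g\}|$ differ. With infinitely many knots, I would extract an infinite subfamily with pairwise distinct invariants. Rank also works if ranks are unbounded, via the Grushko-type bound $\operatorname{rank}(A*_{\bZ}B)\ge \operatorname{rank} B-\operatorname{rank} A$-ish estimate, after passing to a subsequence.

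The main obstacle is this separation step: proving that the invariant of the amalgam genuinely depends on $K_n$ and takes infinitely many values. Rank lower bounds for amalgamated products over $\bZ$ are delicate. So I would first try a finite-quotient count, for example maps to dihedral or metabelian groups detected by the Alexander polynomial of $K_n$. The other ingredients are routine: the invariance of the combinatorial type, and the van Kampen computation, which is already carried out for Theorem \ref{thm:main_nonmin}.
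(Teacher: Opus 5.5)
Your setup matches the paper's construction and is correct: the connected sum is taken in a small ball away from $\Sing(\cA)$, which preserves the combinatorial type, and van Kampen describes $\pi_1(U(\cA_n))$ as an amalgam of $\pi_1(U(\cA))$ and the knot group along the meridians. Your first choice of invariant, the rank $d(\pi_1)$ combined with the knots of Lemma~\ref{lem:2-knot}, is also exactly what the paper uses. However, the step that carries the whole proof is left open. You need $d(\pi_1(U(\cA_n)))\to\infty$, and for this you invoke a ``Grushko-type'' inequality $\operatorname{rank}(A*_{\bZ}B)\ge\operatorname{rank}B-\operatorname{rank}A$. Grushko's theorem concerns free products, and there is no such general inequality you can simply quote for amalgams over $\bZ$. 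You give no argument for it, so as written nothing separates the $\cA_n$.

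Your fallback invariants are not carried out either. A homomorphism count to a finite group is a sum over $g$ of (maps on the $U(\cA)$ side with meridian $\mapsto g$) times (maps on the knot side with meridian $\mapsto g$). You would still have to show the first factor is nonzero for the relevant $g$, and that the totals take infinitely many values. An Alexander module ``with respect to a suitable abelianization map'' is not a homotopy invariant unless you account for all epimorphisms to $\bZ$, since a homotopy equivalence need not respect your chosen one.

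The missing idea is that $\pi_1(U(\cA_n))$ surjects onto the knot group $G_n=\pi_1(S^4\setminus K_n)$, so the amalgamation costs nothing.

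\begin{itemize}
\item By Theorem~\ref{thm:homol_1st}, the meridian of the modified line is primitive in $H_1(U(\cA))\cong\bZ^n$ (relabel so that it is some $H_k$ with $k\neq 0$).
\item Hence the composite $\pi_1(U(\cA))\to H_1(U(\cA))\to\bZ\to G_n$ sends that meridian to the meridian of $K_n$ identified with it.
\item Together with the identity on $G_n$, this is compatible with the amalgamation. It therefore defines a surjection $\pi_1(U(\cA_n))\twoheadrightarrow G_n$.
\end{itemize}

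This is precisely the map $\varphi$ in the paper's proof of Theorem~\ref{thm:main_nonmin}, the result you cite for the van Kampen step. It gives $d(\pi_1(U(\cA_n)))\ge d(G_n)\ge n+1$.

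Each of these ranks is finite, because $U(\cA_n)$ deformation retracts onto a compact manifold. So the ranks are finite and unbounded, and you can pass to a subsequence with strictly increasing ranks. Since $d(\pi_1)$ is a homotopy invariant, the complements in that subsequence are pairwise non-homotopy equivalent.

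The same trick, factoring maps on the $U(\cA)$ side through $H_1(U(\cA))$, is also what would make your finite-group count work. For example, with $S_3$ and spun connected sums of $n$ trefoils, the count grows like $3^n$. The rank argument alone already suffices.
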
 

This result further illustrates the flexibility of topological line arrangement complements.
The same idea enables us to show that any $2$-knot complement can appear as the complement of a topological line arrangement (Theorem \ref{thm:main_knot}).
%This result further illustrates the flexibility of topological line arrangement complements. By contrast, among complex line arrangement complements, the only $2$-knot group that can occur as the fundamental group is $\bZ$, the group of the trivial $2$-knot due to the minimality (see Proposition \ref{prop:min_fund}).
%Thus, topological line arrangements provide many new homotopy types that are invisible in the complex setting.

The paper is organized as follows. 
In Section \ref{sec:prelim}, we review basic notions concerning topological line arrangements, their combinatorics, Poincar\'e polynomials, and the Orlik--Solomon algebra. 
In Section \ref{sec:hom}, we compute the homology groups of complements of topological line arrangements. We first analyze the union of the lines, its tubular neighborhood, and the associated boundary manifold. We then compute the homology groups. We also construct explicit bases for these homology groups. 
Section \ref{sec:cohom} is devoted to the cohomology ring. We give two proofs of Theorem \ref{thm:main_cohom}. First, using Poincar\'e-Lefschetz duality, we translate the cup product into the intersection product in relative homology. We construct relative cycles dual to the homology basis obtained in Section \ref{sec:hom} and calculate their intersections.
We then give a shorter direct computation of the cup product by evaluating it on the nbc tori and using the linking numbers of the local links.
In Section \ref{sec:min}, we study the homotopy type of the complement. We begin by recalling several basic properties of minimal CW complexes. We then prove the minimality of symplectic line arrangement complements, establishing Theorem \ref{thm:main_symp}. After that, we construct topological line arrangements with non-minimal complements and non-homotopy equivalent families, proving Theorems \ref{thm:main_nonmin} and \ref{thm:main_infty}. We also compare topological line arrangements with the $2$-pseudoarrangements of Bj\"orner and Ziegler. 
Section \ref{sec:app} recalls the basic facts about homology intersection rings.

\vspace{3mm}

\textbf{Use for AI.} 
The author used ChatGPT 5.6 Sol for exploratory discussions related to the proof of Theorem \ref{thm:main_nonmin} and language editing. The author subsequently verified all mathematical arguments. 

\vspace{3mm}

\textbf{Acknowledgements.} 
The author would like to thank Jumpei Yasuda for helpful discussions.
This work is supported by JSPS KAKENHI 25K23326.

\vspace{3mm}

\textbf{Convention.}
In this paper, we use a standard orientation convention for transverse intersections and manifolds with boundary, see \cite{gul-pol}. In particular, the boundary of oriented manifolds is oriented by the outward normal first convention.

\section{Preliminaries} \label{sec:prelim}

\subsection{Topological line arrangements}

\begin{definition}
A \textit{topological line} $H$ is a locally flat embedded $2$-sphere in $\bC P^2$ satisfying $[H] = [\bC P^1] \in H_2 (\bC P^2; \bZ)$.
A finite set $\cA = \{H_0, \dots, H_n\}$ is called a \textit{topological line arrangement} if distinct $H_{i}$ and $H_{j}$ intersect in exactly one point transversally and positively for each $i \neq  j$.
We write $U(\cA) = \bC P^2 \setminus \bigcup_{i=0}^{n} H_i$ for its complement.
\end{definition}

Similarly, we can define a smooth/symplectic/algebraic line arrangement.
An algebraic line arrangement is the same notion as a classical complex line arrangement.

\begin{definition}
Let $\cA$ be a topological line arrangement. We write $\Sing (\cA)$ for the set of intersection points of lines in $\cA$. 
We define its subset $\Sing_{i} (\cA) = \{P \mid P \in H_{i}\}$. 
Let $n_{i} = |\Sing_{i} (\cA)|$.
For each $P \in \Sing (\cA)$, we define the subarrangement $\cA_{P} = \{H \in \cA \mid H \ni P\}$.
Let $m_{P} = |\cA_{P}|$.
\end{definition}

%\begin{remark}
%Local model may not be a Hopf link of $m_P$ components
%\end{remark}

\begin{definition}
Let $\cA$ be a topological line arrangement. The \textit{intersection poset} (or the \textit{combinatorial type}) is defined by 
\[
L(\cA) = \left \{ \bigcap_{H \in \cB} H \Biggm | \cB \subset \cA \right \}.
\]
The order is defined as $X \leq Y$ if $X \supset Y$.
\end{definition}
Let $L_{k} (\cA)$ be the set of codimension $k$ intersections. That is, $L_1 (\cA) = \cA$, $L_2 (\cA) = \Sing(\cA)$, and $L_3 (\cA) = \{\emptyset\}$, if $\cA$ is not a pencil. By convention, we set $\codim \emptyset = 3$.

\begin{remark}
In the usual definition of the intersection poset for an affine arrangement, we assume $\emptyset \notin L(\cA)$ \cite[Definition 2.1]{orl-ter}. However, we suppose $\emptyset \in L(\cA)$ in this paper. This is because we would like to consider $L(\cA)$ as the combinatorial type of central arrangements. 
\end{remark}

\begin{definition}
We define the \textit{M\"obius function} $\mu: L(\cA) \to \bZ$ by 
\begin{eqnarray*}
\mu (X) = 
\left \{
\begin{array}{ll}
1 & (X=\bC P^2), \\
-\sum_{Y < X} \mu (Y) & (X > \bC P^2).  
\end{array}
\right .
\end{eqnarray*}
\end{definition}

One has $\mu(H) = -1$, $\mu(P) = m_{P}-1$ for each $H \in \cA$ and $P \in \Sing (\cA)$. Moreover, $\sum_{X \in L(\cA)} \mu (X) = 0$.

\begin{definition}
The \textit{Poincar\'e polynomial} of $\cA$ is defined by
\[
\pi (\cA, t) = \sum_{X \in L(\cA)} \mu (X) (-t)^{\codim X}.
\]
Since $\pi (\cA, -1) = \sum_{X \in L(\cA)} \mu (X) =0$, $(t+1)$ divides $\pi (\cA,t)$. The quotient $\overline{\pi} (\cA,t) = \pi (\cA,t)/ (t+1)$ is called the \textit{reduced Poincar\'e polynomial}.
\end{definition}

We have the following explicit expansion of the reduced Poincar\'e polynomial.

\begin{proposition}\label{prop:exp_poin}
Let $\cA =\{H_0,\dots, H_n\}$ be a topological line arrangement and fix $H \in \cA$.
Then, we have 
\[
\overline{\pi} (\cA,t) = 1 + n t + \sum_{P \in \Sing (\cA) \setminus \Sing_{H} (\cA)} (m_{P}-1)t^2.
\]
\end{proposition}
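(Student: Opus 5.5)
We will compute $\pi(\cA,t)$ explicitly from the definition, then divide by $(t+1)$ and rewrite the $t^2$-coefficient using a double-counting identity. The argument is purely combinatorial, so it only uses that any two lines meet in exactly one point.

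First assume $\cA$ is not a pencil, so $L(\cA)=\{\bC P^2\}\cup\cA\cup\Sing(\cA)\cup\{\emptyset\}$. With $\mu(\bC P^2)=1$, $\mu(H_i)=-1$ and $\mu(P)=m_P-1$, we get
\[
\pi(\cA,t)=1+(n+1)t+\Big(\sum_{P\in\Sing(\cA)}(m_P-1)\Big)t^2-\mu(\emptyset)t^3 .
\]
The vanishing $\sum_X\mu(X)=0$ gives $\mu(\emptyset)=n-\sum_P(m_P-1)$. Set $S=\sum_{P}(m_P-1)$. We then write $\pi(\cA,t)=(1+t)(1+at+bt^2)$ and match coefficients: $a=n$ from the $t$-coefficient, and $b=S-n$ from the $t^2$-coefficient. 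As a check, the $t^3$-coefficient is $b=S-n=-\mu(\emptyset)$, which is consistent.

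The key step is to show $S-n=\sum_{P\notin\Sing_H(\cA)}(m_P-1)$, which is equivalent to $\sum_{P\in\Sing_H(\cA)}(m_P-1)=n$. To see this, note that every other line $H_j\neq H$ meets $H$ in exactly one point $P$, and $P\in\Sing_H(\cA)$. Conversely, each $P\in\Sing_H(\cA)$ accounts for exactly $m_P-1$ lines other than $H$. Double counting the pairs ($H_j\neq H$, $P=H\cap H_j$) therefore gives $n$ on one side and $\sum_{P\in\Sing_H}(m_P-1)$ on the other.

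It remains to handle the pencil case separately. There $\Sing(\cA)=\{P\}$ with $m_P=n+1$, and $L_3$ contains no $\emptyset$. Hence $\pi=1+(n+1)t+nt^2=(1+t)(1+nt)$. The sum in the claimed formula is empty because $P\in H$, so the formula holds here too. The case of a single line, where $\pi=1+t$, is also consistent with the formula.

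The main obstacle is not any computation but the bookkeeping around the poset convention: including $\emptyset\in L(\cA)$ with $\codim\emptyset=3$, and treating the pencil case, where the top element is $P$ rather than $\emptyset$.
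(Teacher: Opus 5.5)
Your proof is correct and follows the paper's argument. You expand $\pi(\cA,t)$ using the M\"obius values, factor out $(1+t)$ to get the $t^2$-coefficient $\sum_{P}(m_P-1)-n$, and rewrite it via the double-counting identity $\sum_{P\in\Sing_H(\cA)}(m_P-1)=n$; your separate treatment of the pencil case is slightly more careful than the paper's, which covers that case implicitly because the $t^3$-coefficient $\sum_P(m_P-1)-n$ vanishes there.
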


\proof
By the definition of the M\"obius function, we have 
\begin{eqnarray*}
\pi (\cA, t) &=&
1 + (n+1) t + \sum_{P \in \Sing (\cA)} (m_P -1) t^2 + \left (\sum_{P \in \Sing (\cA)}(m_P -1) -n  \right )t^3 \\
&=& (1+t) (1 + n t + \left ( \sum_{P \in \Sing (\cA)} (m_P -1 ) -n \right )t^2  ).
\end{eqnarray*}
Since $\sum_{P \in \Sing_{H}(\cA)}(m_{P}-1) = n$ for each $H \in \cA$, the claim follows.
\endproof

\subsection{Orlik-Solomon algebra}
Let $\cA = \{H_0 ,\dots, H_n\}$ be a topological line arrangement. 
Suppose that the topological lines are ordered as the labelling and fix $H_{0}$ as the ``line at infinity''.
For $P \in \Sing (\cA)$, we set $\min P =\min \{i \mid H_i \in \cA_{P}\}$.

\begin{definition}
A pair of positive integers $S=(j,k)$ ($1 \leq  j<k \leq n$) is called \textit{nbc}, if there exists $P \in \Sing (\cA)$ such that $P = H_{j} \cap H_{k}$ and $\min P = j$. 
We denote by $\mathbf{nbc}(\cA)$ the set of all nbcs in $\cA$.
\end{definition}

Note that the index $0$ does not appear in nbc. 
\begin{definition}
We define a graded $\bZ$-algebra $A=A(\cA) = \bigoplus_{k=0}^{2}A^{k}$ as follows: $A^{0} = \bZ$, $A^{1}$ is a free module with a basis $\{e_1, \ldots, e_n\}$, symbols corresponding to lines $H_1,\ldots, H_n$, and $A^2$ is defined as a free $\bZ$-module with a basis $\{f_{j,k} \mid (j,k) \in \mathbf{nbc}(\cA)\}$.
The multiplication $\mu: A^1 \wedge A^1 \rightarrow A^2$ is defined as:
\begin{equation*}
\mu(e_i, e_j) =
\left \{ 
\begin{array}{ll}
f_{i,j} & (\mbox{if $(i,j) \in \mathbf{nbc}(\cA)$}) , \\
f_{k,j} - f_{k,i} & (\mbox{if there exists $k$ such that $(k,i), (k,j) \in \mathbf{nbc}(\cA)$}) , \\
0, & (\mbox{otherwise})
\end{array}
\right .
\end{equation*}
for $1 \leq i < j \leq n$. This graded algebra $A$ is called the \textit{Orlik-Solomon algebra} of $\cA$.
\end{definition}

This definition coincides with the Orlik-Solomon algebra of an affine line arrangement obtained by the deconing (see \cite[Section 3.2]{orl-ter} and \cite[Section 7.10]{bjo} for general matroids).

By Proposition \ref{prop:exp_poin}, we have
\[
\sum_{k=0}^{2} (\rank_{\bZ}A^{k}) t^{k} = \overline{\pi} (\cA,t).
\]

\section{Homology groups}\label{sec:hom}
From now on, let $\cA = \{H_0,\dots, H_n \}$ be a topological line arrangement, and the lines are ordered as in this labelling. We fix $H_0$ as the ``line at infinity''.

In this section, we compute the homology groups of the complement and give an explicit basis.
We will consider (co)homology groups only with integer coefficients. Thus we will omit the coefficient.

\subsection{The topology of $V(\cA)$}
Let $V(\cA) = \bigcup_{i=0}^{n}H_{i}$ be the union of topological lines.
In this subsection, we study the topology of $V(\cA)$.
First, since $V(\cA)$ is a $2$-dimensional CW complex which is the union of $(n+1)$ transverse intersecting $2$-spheres, we have the following.

\begin{proposition}
$H_{2} (V(\cA)) \cong \bZ^{n+1}$ and $\{H_0,\dots, H_{n}\}$ is a basis of $H_2 (V(\cA))$.
\end{proposition}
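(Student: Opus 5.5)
We compute $H_2(V(\cA))$ with a Mayer--Vietoris argument, or more simply with a cellular chain complex adapted to the union. Since the $H_i$ are locally flat spheres meeting pairwise transversally in single points, $V(\cA)$ can be triangulated, or given a CW structure, as follows.

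\textbf{Cell structure.}
\begin{itemize}
\item The $0$-cells are the points of $\Sing(\cA)$.
\item On each sphere $H_i$, take a tree $T_i$ (for instance a star from one vertex, or a path) joining the $n_i$ points of $\Sing_i(\cA)$; these edges are the $1$-cells. If $n_i = 1$, add no edges.
\item Each $H_i$ cut along $T_i$ is a single open $2$-disk, which serves as the $2$-cell $D_i$.
\end{itemize}
Such a structure exists because a locally flat $2$-sphere is a topological $S^2$, and any finite tree embedded tamely in $S^2$ has complement an open disk. Since the only mutual intersections are the singular points, which are $0$-cells, this is a genuine CW structure on $V(\cA)$ with exactly $n+1$ two-cells.

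\textbf{$H_2$ is free on the spheres.} $V(\cA)$ is $2$-dimensional, so $H_2(V(\cA)) = \ker(\partial_2 \colon C_2 \to C_1)$. The boundary $\partial D_i$ traverses each edge of $T_i$ twice in opposite directions, so $\partial_2 D_i = 0$ for every $i$. Hence $\partial_2 = 0$ and $H_2(V(\cA)) = C_2 \cong \bZ^{n+1}$. The cellular class of $D_i$ is precisely the fundamental class $[H_i]$ of the subcomplex $H_i \cong S^2$. Therefore $\{[H_0],\dots,[H_n]\}$ is a basis.

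\textbf{Alternative without a CW structure.} Let $N$ be a small regular neighborhood of $\Sing(\cA)$ in $V(\cA)$. Each component of $N$ is a cone, a wedge of $m_P$ disks, hence contractible. Then $V(\cA) \setminus \Int N$ is the disjoint union of the $H_i$ with $n_i$ open disks removed. Mayer--Vietoris for $V = (V\setminus \Int N) \cup N$ gives
\[
0 \to H_2(V\setminus\Int N)\oplus H_2(N)=0 \to H_2(V) \to H_1(\partial N) \to H_1(V\setminus \Int N).
\]
The map $H_2(V) \to H_1(\partial N)$ has image equal to the kernel of $\bigoplus_{i,P} \bZ[\text{circles}] \to \bigoplus_i H_1(H_i \setminus n_i \text{ disks})$. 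On each $H_i$ this kernel is $\bZ$, generated by the sum of its boundary circles. So $H_2(V) \cong \bZ^{n+1}$, with the $i$-th generator mapping to $\partial(H_i \setminus \text{disks})$, i.e.\ coming from $[H_i]$.

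\textbf{Main obstacle.} The only delicate step is the topological, locally flat setting: one must justify that the cell structure or regular neighborhoods exist. Local flatness together with the transversal local models at intersection points provides the cone structure near $\Sing(\cA)$. Away from $\Sing(\cA)$ the spheres are disjoint, so cutting each along arcs is a statement about $S^2$ alone, and the Schoenflies theorem handles it.
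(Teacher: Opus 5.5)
Your proof is correct and is essentially the paper's argument made explicit: the paper only remarks that $V(\cA)$ is a $2$-dimensional CW complex formed by $n+1$ spheres meeting at points, and your cell structure (vertices $\Sing(\cA)$, trees $T_i$, one $2$-cell per line with $\partial_2=0$) is exactly the verification of that remark. Two minor points: when $\cA$ has a single line, $\Sing(\cA)=\emptyset$, so you must add a $0$-cell; and neither of your arguments actually uses local flatness or transversality, only $H_i\cong S^2$ and $|H_i\cap H_j|=1$.
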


However, $V(\cA)$ is not a wedge sum of $(n+1)$ spheres in general. In particular, $H_1(V(\cA))$ may be non-zero.

For each $H_{i}$, remove a small disk from it, and let $V^{\circ}(\cA)$ be the union of $H_{i}$'s with the disk removed. 
The Hasse diagram of the subset $L_{1} (\cA) \cup L_2(\cA)$ of the intersection poset can be considered as a bipartite graph, and denote it by $\Gamma (\cA)$. 
By taking a point $x_{i} \in H_{i}$ for each $i$ and connecting $x_{i}$ and $P \in \Sing (\cA)$ for each pair $P \in H_{i}$, we can embed $\Gamma(\cA)$ into $V^{\circ} (\cA)$.
It is easy to see that $V^{\circ}(\cA)$ deformation retracts to $\Gamma (\cA)$.
The graph $\Gamma (\cA)$ has the following topological property.

\begin{proposition}
The graph $\Gamma (\cA)$ is a connected graph with $b_1 (\Gamma (\cA)) = |\mathbf{nbc}(\cA)|$. Moreover, we can construct an explicit basis $\{\ell_{j,k}\}$ using nbc.
\end{proposition}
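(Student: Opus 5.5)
Connectedness is immediate: any two lines $H_i,H_j$ meet in a point $P$, so $x_i$–$P$–$x_j$ is a path in $\Gamma(\cA)$. Every singular point lies on at least two lines, so every vertex is joined to some $x_i$. Since $\Gamma(\cA)$ is connected, $b_1(\Gamma(\cA)) = |E| - |V| + 1$. The vertices are the $n+1$ points $x_i$ together with $\Sing(\cA)$. The edges correspond to incident pairs $(H_i,P)$ with $P\in H_i$, so $|E| = \sum_{P\in\Sing(\cA)} m_P$. Hence
\[
b_1(\Gamma(\cA)) = \sum_{P\in\Sing(\cA)} m_P - (n+1) - |\Sing(\cA)| + 1 = \sum_{P\in\Sing(\cA)}(m_P-1) - n .
\]
By the identity $\sum_{P\in\Sing_{H_0}(\cA)}(m_P-1)=n$ used in the proof of Proposition \ref{prop:exp_poin}, this equals $\sum_{P\notin H_0}(m_P-1)$. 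The nbc pairs are exactly the pairs $(\min P, k)$ with $P\notin H_0$ and $H_k\in\cA_P\setminus\{H_{\min P}\}$, so there are $m_P-1$ of them for each such $P$. This gives $b_1 = |\mathbf{nbc}(\cA)|$.

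For the explicit basis, I would use a spanning tree $T$ and fundamental cycles. Take $T$ to consist of:
\begin{itemize}
\item all edges $x_0P$ with $P\in \Sing_{H_0}(\cA)$;
\item for each $P\notin H_0$, the single edge $x_{\min P}P$.
\end{itemize}
To check that $T$ is a spanning tree, note first that every $x_i$ with $i\ge1$ is joined to $x_0$ through the point $H_0\cap H_i$, and every $P\notin H_0$ hangs off $x_{\min P}$ as a leaf. So $T$ is connected and contains all vertices. Its edge count is $|\Sing_{H_0}(\cA)| + |\Sing(\cA)\setminus\Sing_{H_0}(\cA)| = |\Sing(\cA)| = |V|-1$, so $T$ is a tree.

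The edges not in $T$ are of two kinds:
\begin{itemize}
\item edges $x_kP$ with $P\notin H_0$ and $k\neq\min P$, which correspond bijectively to nbc pairs $(j,k)=(\min P,k)$;
\item edges $x_kP$ with $P\in H_0$ and $k\ne0$.
\end{itemize}
The second kind does not fit the count, so the choice of $T$ must be modified. At $P\in H_0$ there are $m_P$ edges, but only one of them (to $x_0$) lies in the tree so far. The repair is to note that $x_k$ for $k\ge1$ should be reached from $x_0$ through exactly one point of $H_0$, namely $H_0\cap H_k$, and that point already contains the edge $x_kP$. So $T$ should consist of all edges at points $P\in H_0$, plus the edges $x_{\min P}P$ for $P\notin H_0$. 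Recounting, this gives $\sum_{P\in H_0} m_P + |\Sing(\cA)\setminus\Sing_{H_0}(\cA)|$ edges. Using $\sum_{P\in H_0}(m_P-1)=n$, this equals $n + |\Sing(\cA)| = |V|-1$. Acyclicity holds because each $x_k$ ($k\ge1$) lies on exactly one point of $H_0$. Thus $T$ is a spanning tree, and the non-tree edges are exactly the first kind, indexed by $\mathbf{nbc}(\cA)$.

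For each $(j,k)\in\mathbf{nbc}(\cA)$ with $P=H_j\cap H_k$, I define $\ell_{j,k}$ to be the fundamental cycle of the edge $x_kP$: go from $x_j$ to $P$, then to $x_k$, then back to $x_j$ along the unique tree path through $H_0$. The standard theory of fundamental cycles then shows that $\{\ell_{j,k}\}$ is a basis of $H_1(\Gamma(\cA))$.

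The only delicate step is choosing the spanning tree so that the non-tree edges biject with nbc pairs; the corrected choice above handles this.
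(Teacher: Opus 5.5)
Your proof is correct and follows essentially the paper's route: your corrected spanning tree (all edges at points of $H_0$, plus the edge $x_{\min P}P$ for each $P\notin H_0$) is exactly the paper's tree $T$. The non-tree edges biject with $\mathbf{nbc}(\cA)$ via $(H_k,P)\mapsto(\min P,k)$, and your fundamental cycles are the paper's six-edge cycles $\ell_{j,k}$, since $H_0\cap H_j\neq H_0\cap H_k$. Your separate Euler-characteristic count of $b_1$ is a harmless extra check. In a final write-up, drop the first tree: it omits the edges $x_i(H_0\cap H_i)$, so it is not connected, and it has only $|\Sing(\cA)|\neq |V|-1$ edges.
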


\proof
Let $\Gamma'(\cA)$ be a spanning tree whose edge set $T$ is defined by 
\[
T=\{(H, P) \mid P \in \Sing_{0} (\cA)\} \cup \{(H_{i}, P) \mid i = \min P\}.
\]
Then, the complement edge set $C$ is written as 
\[
C=\{(H_{i}, P) \mid P \notin \Sing_{0} (\cA) , \, i \neq \min P\}.
\]
Since $\Gamma (\cA)$ is a connected graph, $b_1 (\Gamma) = |C|$.
There exists a bijection between $C$ and $\mathbf{nbc} (\cA)$ defined by 
$(H_{i}, P) \mapsto (\min P, i)$. Thus we have $b_1 (\Gamma) = |\mathbf{nbc} (\cA)|$.
For nbc $(j,k)$ we define a cycle $\ell_{j,k}$ by concatenating six edges 
\[
H_0-H_{0} \cap H_j - H_{j} -H_{j} \cap H_{k} - H_{k} - H_{0} \cap H_{k} - H_{0}
\]
in $\Gamma (\cA)$. This cycle represents a basis of $H_1 (\Gamma (\cA))$.
\endproof

Since the inclusion $V^{\circ} (\cA) \to V(\cA)$ induces an isomorphism between the first homology groups, we have the following.
\begin{corollary}
$H_{1} (V(\cA)) \cong \bZ^{|\mathbf{nbc}(\cA)|}$ with the basis $\{\ell_{j,k}\}$.
\end{corollary}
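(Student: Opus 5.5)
The corollary follows once we know that the inclusion $V^{\circ}(\cA) \hookrightarrow V(\cA)$ induces an isomorphism on $H_1$. Combined with the deformation retraction $V^{\circ}(\cA) \simeq \Gamma(\cA)$ and the preceding proposition, this gives $H_1(V(\cA)) \cong H_1(\Gamma(\cA)) \cong \bZ^{|\mathbf{nbc}(\cA)|}$ with basis $\{\ell_{j,k}\}$.

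To prove the isomorphism, we use the structure of $V(\cA)$. It is obtained from $V^{\circ}(\cA)$ by gluing back the $n+1$ removed open disks $D_0,\dots,D_n$, one in each $H_i$. Each $D_i$ is chosen away from $\Sing(\cA)$ and from the embedded graph, and is attached along its boundary circle $\partial D_i \subset H_i \setminus \Int D_i$.

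We apply Mayer--Vietoris to $V(\cA) = V^{\circ}(\cA) \cup \bigsqcup_i \overline{D_i}$, slightly thickened so the pieces are open. The intersection deformation retracts to $\bigsqcup_i \partial D_i$, a disjoint union of circles. This gives the exact segment
\[
H_1\Bigl(\bigsqcup_i \partial D_i\Bigr) \to H_1(V^{\circ}(\cA)) \oplus H_1\Bigl(\bigsqcup_i \overline{D_i}\Bigr) \to H_1(V(\cA)) \to \tilde H_0\Bigl(\bigsqcup_i \partial D_i\Bigr) \to \tilde H_0(V^{\circ}(\cA)) \oplus \tilde H_0\Bigl(\bigsqcup_i \overline{D_i}\Bigr).
\]
The last map is injective, because the $\overline{D_i}$ are distinct components just as the $\partial D_i$ are; working with unreduced $H_0$ avoids any subtlety here. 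Hence $H_1(V^{\circ}(\cA)) \to H_1(V(\cA))$ is surjective.

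The key step is that each $[\partial D_i]$ vanishes in $H_1(V^{\circ}(\cA))$. Indeed, $\partial D_i$ bounds the punctured sphere $H_i \setminus \Int D_i$, which lies in $V^{\circ}(\cA)$: this is the 2-chain $H_i \setminus D_i$. Thus the first map has image $0$ in the $V^{\circ}$ component, and the inclusion is injective as well, hence an isomorphism.

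An equivalent cellular viewpoint: $V(\cA)$ is $V^{\circ}(\cA)$ with $n+1$ 2-cells attached along null-homologous loops, so $H_1$ is unchanged while $H_2$ grows. This is consistent with the earlier proposition that $H_2(V(\cA)) \cong \bZ^{n+1}$.

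The only delicate point is checking that the disks can be removed so that $V^{\circ}(\cA)$ really retracts onto $\Gamma(\cA)$, and that the retraction sends $\ell_{j,k}$ to the stated cycles. Each $H_i$ minus a disk is itself a disk containing the points $\Sing_i(\cA)$, which retracts onto the star joining $x_i$ to those points. This is local, since the lines meet only at finitely many points, so the step should be routine.
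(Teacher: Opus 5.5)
Your proposal is correct and follows the paper's route: $H_1(V(\cA)) \cong H_1(V^{\circ}(\cA)) \cong H_1(\Gamma(\cA))$, with the basis supplied by the preceding proposition. The paper only asserts that the inclusion $V^{\circ}(\cA) \hookrightarrow V(\cA)$ is an isomorphism on $H_1$; your Mayer--Vietoris argument fills in that step, using that each $\partial D_i$ bounds the complementary disk $H_i \setminus \Int D_i$ inside $V^{\circ}(\cA)$.
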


\subsection{A regular neighborhood of an arrangement and its boundary}
Here, we recall the construction of the neighborhood of $V(\cA)$ and its boundary manifold; see also \cite{coh-suc-bound, fgm, eld, liu-xie}.

Let $\varepsilon_1, \varepsilon_2$ be sufficiently small numbers with $0 < \varepsilon_1 \ll \varepsilon_2 \ll 1$.
Let $N_{H} = N(H, \varepsilon_1)$ be the $\varepsilon_1$-neighborhood of $H$ for each $H \in \cA$, and $B_{P} = B(P,\varepsilon_2)$ be the $4$-ball centered at $P$ with the radius $\varepsilon_2$ for each $P \in \Sing (\cA)$. 
We orient these pieces by restricting that of $\bC P^2$.
The space $N_{H}$ is the total space of a $D^2$-bundle over $H$ with the Euler number $+1$. We denote by $\pi_{H} : N_H \to H$ the projection of this bundle.
The regular neighborhood $N(\cA)$ of the arrangement is described as
\[
N(\cA) = \bigcup_{H \in \cA} N_H \cup \bigcup_{P \in \Sing (\cA)} B_{P}.
\]
Note that the neighborhood $N(\cA)$ deformation retracts to $V(\cA)$.

The boundary manifold is obtained by the following construction.
For each $H_i \in \cA$, define the piece $X_{i}$ by 
\[
X_{i} = (\pi_{i} |_{\bd (N_H)})^{-1} (H_i \setminus \bigcup_{P \in \Sing_{i} (\cA)} (H_{i} \cap B_{P}))
\]
This is the total space of $S^1$-bundle over $H_i- \bigcup_{P \in \Sing_{i} (\cA)} (H_{i} \cap B_P)$, which is a genus zero surface with $n_i$ boundary components (recall that $n_i = |\Sing_i (\cA)|$). 
Thus the bundle is trivial and we can identify $X_{i}$ with $(S^2 \setminus \{\mbox{$n_i$ disks}\}) \times S^1$. This is a compact $3$-manifold with boundary. 
Each boundary component is a torus obtained by the product of the boundary of the base surface and $S^1$.

For each $P \in \Sing (\cA)$, define the piece $Y_P$ by 
\[
Y_P = \partial B_P \setminus \bigcup_{H \in \cA_{P}} \Int( N_H \cap \bd B_{P})
\]
The boundary $\partial B_P$ is a $3$-sphere and each intersection $N_H \cap \bd B_P$ is a solid torus with the core $K_{H,P} = H \cap \partial B_P$. 
$K_{H,P}$ is oriented as the boundary of the oriented disk $H \cap B_P$.
Let $\lambda_{H, P}$ and $\mu_{H, P}$ be the homology classes of the preferred longitude and meridian of the torus $T_{H}^{P} = \bd N_H \cap \bd B_P$.
The orientation of the meridian $\mu_{H,P}$ is induced by that of the meridian disk, which intersects transversally and positively with $H$. 
We give the orientation for the torus $T_H^P$ so that $(\lambda_{H,P}, \mu_{H,P})$ is a positive ordered basis. 
The piece $Y_{P}$ is a compact $3$-manifold obtained from a $3$-sphere by removing solid tori \footnote{Although the lines intersect one another transversally at a single point, this link $\{K_H \mid H \in \cA_P\}$ is not necessarily the $m_P$-component Hopf link in general. However, this is indeed the case in the symplectic and algebraic settings. }. 
Each boundary component of $Y_{P}$ is also a torus.
Since each component $H \in \cA$ intersects in exactly one point transversally and positively, $lk (K_{H,P}, K_{H',P}) =+1$ for any $H, H' \in \cA_{P}$, and each component $K_{H,P}$ is a trivial knot. 

The boundary manifold $\partial N$ is constructed by gluing $X_{i}$'s and $Y_{P}$'s. The boundary tori $\partial (H_{i} \cap B_P ) \times S^1 \subset X_{i}$ and $\partial N_{H_i} \cap \partial B_P$ are glued if $P \in H_{i}$.

\subsection{Homology groups of $U(\cA)$}
Here, we compute the homology group of $U(\cA)$ and give an explicit basis.
Let $U'(\cA) = \bC P^{2} \setminus \Int(N(\cA))$ be the exterior of the arrangement. $U'(\cA)$ is an orientable compact $4$-manifold with boundary, which is a deformation retract of $U(\cA)$.
Thus, we will work with $U'(\cA)$. Note that, $\bd U'(\cA) = - \bd N(\cA)$ as oriented manifolds.

\subsubsection{$H_1 (U'(\cA))$}
Recall that $H^{2}(V(\cA)) \cong \bZ^{n+1}$ and we can take the dual $[H_i]^{*}$ of each lines $H_i$ as a basis of $H^2 (V(\cA))$. By Poincar\'e duality and the excision isomorphism, we have 
\[
H^{2} (N(\cA)) \cong H_2 (N(\cA), \bd N(\cA)) \cong H_2 (\bC P^2 , U'(\cA)).
\]
Through this isomorphism, the dual $[H_{i}]^{*}$ of each line is mapped to the intersection dual $D_{i}$ of the line $H_{i}$. That is, $D_{i}$ is a $2$-relative cycle in $N(\cA)$ such that $\bd N(\cA) \cap D_{i} = \bd D_{i}$, and $[D_i] \cdot [H_{j}] = \delta_{i,j}$. 

It is easy to see that such a $2$-chain $D_{i}$ is realized as a meridian disk of each line $H_{i}$ (the fiber disk of the bundle $\pi_i: N_{H_i} \to H_{i}$ over a point).
We orient the disk $D_i$ so that $[D_i] \cdot [H_i] = +1$.
Let $m_{i} = \bd D_{i}$ and call it the \textit{meridian cycle} of the line $H_i$.
Note that $m_{i}$ is also a cycle in $U'(\cA)$, since it lies in the boundary.

\begin{theorem}\label{thm:homol_1st}
$H_1 (U'(\cA)) \cong \bZ^{n}$ and we can take $\{[m_i]\}_{i=1}^{n}$ as a basis.
\end{theorem}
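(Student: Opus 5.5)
We compute $H_1(U'(\cA))$ from the long exact sequence of the pair $(\bC P^2, U'(\cA))$, using the identification of $H_2(\bC P^2, U'(\cA))$ set up just before the statement. The relevant portion is
\[
H_2(\bC P^2) \xrightarrow{j} H_2(\bC P^2, U'(\cA)) \xrightarrow{\bd} H_1(U'(\cA)) \to H_1(\bC P^2) = 0 .
\]
So $\bd$ is surjective and $H_1(U'(\cA)) \cong \operatorname{coker}(j)$.

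By excision and Poincar\'e--Lefschetz duality,
\[
H_2(\bC P^2, U'(\cA)) \cong H_2(N(\cA), \bd N(\cA)) \cong H^2(N(\cA)) \cong H^2(V(\cA)).
\]
This group is free of rank $n+1$ with basis the meridian disks $[D_0], \dots, [D_n]$, characterized by $[D_i]\cdot[H_j] = \delta_{i,j}$. The connecting map sends $[D_i]$ to $[m_i]$, since $\bd D_i = m_i \subset \bd U'(\cA)$.

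Next we compute $j$ on the generator $[\bC P^1]$. Since the $[H_j]$ form a basis of $H_2(N(\cA)) \cong H_2(V(\cA))$, the class $j[\bC P^1]$ is detected by its intersection numbers with the $H_j$. These are
\[
[\bC P^1]\cdot[H_j] = [\bC P^1]\cdot[\bC P^1] = 1 \quad \text{for every } j,
\]
because each topological line satisfies $[H_j] = [\bC P^1]$. Hence
\[
j[\bC P^1] = [D_0] + [D_1] + \cdots + [D_n].
\]
Therefore
\[
H_1(U'(\cA)) \cong \bZ^{n+1}\big/\bigl\langle (1,1,\dots,1) \bigr\rangle \cong \bZ^n,
\]
with basis the images $[m_1], \dots, [m_n]$, and $[m_0] = -\sum_{i\ge 1}[m_i]$. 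Note that $(1,\dots,1)$ is primitive, so the quotient is torsion-free.

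The main obstacle is justifying $j[\bC P^1] = \sum_i [D_i]$ rigorously. One must check that the composite
\[
H_2(\bC P^2) \to H_2(\bC P^2, U'(\cA)) \cong H_2(N(\cA), \bd N(\cA))
\]
is compatible with the intersection pairing against $H_2(N(\cA))$. The pairing between $H_2(N(\cA), \bd N(\cA))$ and $H_2(N(\cA))$ is perfect, given by Lefschetz duality together with the universal coefficient theorem, since $H_1(V(\cA))$ is free. An absolute class $x \in H_2(\bC P^2)$, restricted to $N(\cA)$, then pairs with $[H_j]$ exactly as the intersection number $x\cdot[H_j]$ in $\bC P^2$.

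Concretely, I would represent $[\bC P^1]$ by a surface transverse to $V(\cA)$; a pushed-off copy of $H_0$ is the natural choice. Its intersection with $N(\cA)$ is a union of normal disks, and these disks, counted with sign, give the relative class. Positivity of the intersections and $[H_0]\cdot[H_j] = 1$ then give coefficient $1$ on each $D_j$.

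Alternatively, Alexander duality $H_1(U) \cong H^3(\bC P^2, V(\cA))$ yields the same cokernel computation. In that form the map $H^2(\bC P^2) \to H^2(V(\cA))$ sends the generator to $(1,\dots,1)$, because $H^2(\bC P^2)$ evaluates to $1$ on each $[H_i] = [\bC P^1]$. This version avoids the geometric push-off and may be the cleaner route.
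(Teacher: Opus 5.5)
Your proof is correct and is essentially the paper's argument. You use the long exact sequence of $(\bC P^2, U'(\cA))$, the identity $\delta[D_i]=[m_i]$, and $j[\bC P^1]=\sum_i[D_i]$ from $[\bC P^1]\cdot[H_j]=1$, and your perfect-pairing argument supplies the justification the paper leaves implicit. One slip in the aside: a small push-off of $H_0$ lies entirely inside $N_{H_0}\subset N(\cA)$, so its trace on $N(\cA)$ is a closed surface, not a union of normal disks (for that picture, take a representative of $[\bC P^1]$ transverse to $V(\cA)$ and missing $\Sing(\cA)$); the pairing argument alone already suffices.
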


\proof
By the homology long exact sequence for the pair $(\bC P^2, U'(\cA))$, we have the following exact sequence:
\[
H_2 (\bC P^2 ) \xrightarrow{j} H_2 (\bC P^2, U'(\cA)) \xrightarrow{\delta} H_1 (U'(\cA)) \to 0.
\]
$H_{2} (\bC P^2) \cong \bZ$ and we can take a generic complex line $L$ as its basis. Since the algebraic intersection number $[L] \cdot [H_i]$ is one for each line $H_{i}$, we have that $j([L]) = [D_0] + \dots + [D_n]$. Therefore, the above exact sequence becomes
\[
\bZ \xrightarrow{(1,1,\dots,1)} \bZ^{n+1} \xrightarrow{\delta} H_1 (U'(\cA)) \to 0.
\]
Moreover, by the definition of the connecting homomorphism, we have $\delta([D_{i}]) = [m_i]$. Thus we have 
\[
H_1 (U'(\cA)) \cong (\bZ m_0 \oplus \dots \oplus \bZ m_n ) / (m_0 + \dots + m_n).
\]
Thus, by substituting $m_0 = - (m_1 + \dots m_{n})$, we have the theorem.
\endproof

\subsubsection{$H_2 (U'(\cA))$}
We take a similar method to compute the second homology group.
By the Poincar\'e duality and excision, we have the isomorphism
\[
H^{1} (N(\cA)) \cong H_3 (N(\cA), \bd N(\cA)) \cong H_3 (\bC P^2, U'(\cA)).
\]
The dual $[\ell_{j,k}]^{*}$ of an nbc cycle is mapped to the intersection dual of $\ell_{j,k}$. 
For an nbc $(j,k)$, define the solid torus by $S_{j,k} = \bd B_P \cap N_{H_k}$. It is also easy to see that $S_{j,k}$ is the intersection dual of $\ell_{j,k}$ and $S_{j,k} \cap \bd N(\cA) = \bd S_{j,k}$. 
The torus $T_{j,k} = \bd S_{j,k}$ is called the \textit{nbc torus} of $(j,k)$. 
The nbc torus $T_{j,k}$ is oriented as in the previous subsection, that is, $(\lambda_{k,P}, \mu_{k,P})$ is the positive direction. 

\begin{theorem}
$H_2 (U'(\cA)) \cong \bZ^{|\mathbf{nbc}(\cA)|}$ and we can take $\{[T_{j,k}] \mid (j,k)\in \mathbf{nbc}(\cA)\}$ as a basis.
\end{theorem}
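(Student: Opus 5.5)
We follow the same strategy as in the proof of Theorem \ref{thm:homol_1st}, this time using the long exact sequence of the pair $(\bC P^2, U'(\cA))$ in degrees $3$ and $2$:
\[
H_3(\bC P^2) \to H_3(\bC P^2, U'(\cA)) \xrightarrow{\delta} H_2(U'(\cA)) \to H_2(\bC P^2) \xrightarrow{j} H_2(\bC P^2, U'(\cA)) \xrightarrow{\delta} H_1(U'(\cA)) \to 0 .
\]
Since $H_3(\bC P^2)=0$, the first map is zero and $\delta$ is injective in degree $3$. In the proof of Theorem \ref{thm:homol_1st} we saw that $j$ sends the generator $[L]$ to $[D_0]+\dots+[D_n]\neq 0$ in the free group $\bZ^{n+1}$. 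Hence $j$ is injective, and the map $H_2(U'(\cA))\to H_2(\bC P^2)$ is zero. It follows that $\delta\colon H_3(\bC P^2,U'(\cA))\to H_2(U'(\cA))$ is an isomorphism.

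Next we identify the source. By the chain of isomorphisms stated just before the theorem,
\[
H_3(\bC P^2,U'(\cA))\cong H_3(N(\cA),\bd N(\cA))\cong H^1(N(\cA))\cong H^1(V(\cA)).
\]
The corollary above gives $H_1(V(\cA))$ free with basis $\{\ell_{j,k}\}$, so by universal coefficients $H^1(V(\cA))\cong \Hom(H_1(V(\cA)),\bZ)$ is free with dual basis $\{[\ell_{j,k}]^*\}$. The rank is $|\mathbf{nbc}(\cA)|$. It then remains to show that under these isomorphisms $[\ell_{j,k}]^*$ corresponds to $[S_{j,k}]$ (up to a sign fixed by orientations), and that $\delta[S_{j,k}]=[\bd S_{j,k}]=[T_{j,k}]$. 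The second point is just the definition of the connecting map, since $S_{j,k}$ is a relative $3$-chain whose boundary lies in $\bd N(\cA)$.

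The main obstacle is the first point: showing that $\{[S_{j,k}]\}$ is dual to $\{\ell_{j,k}\}$ under the intersection pairing
\[
H_3(N,\bd N)\times H_1(N)\to\bZ,
\]
that is, $[S_{j,k}]\cdot[\ell_{j',k'}]=\pm\delta_{(j,k),(j',k')}$. Set $P=H_j\cap H_k$. The solid torus $S_{j,k}=\bd B_P\cap N_{H_k}$ meets $V(\cA)$ only along the circle $K_{H_k,P}$. The cycle $\ell_{j',k'}$ lies in the graph $\Gamma(\cA)$, and among its six edges only the one joining $P$ to $x_k$ inside $H_k$ can cross $\bd B_P$ within $N_{H_k}$. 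That edge appears in $\ell_{j',k'}$ exactly when the path passes through the vertex $P$ and then the vertex $H_k$. This happens only when $(j',k')=(j,k)$: an nbc cycle passes through $P$ only via $H_{j'}\cap H_{k'}$ (because $P\notin H_0$, as $j\geq 1$ and $P$ is not on $H_0$ by the nbc condition), and it enters $H_k$ from $P$ only if $k'=k$, which forces $j'=\min P=j$. In that case the crossing is transverse in a single point. For this we would perturb $\ell$ off $V$ into $N$, or equivalently note that the edge $P$–$x_k$ crosses the sphere $\bd B_P$ once, within $N_{H_k}$. So the pairing matrix is a signed identity, hence unimodular. Orientation conventions only affect signs, and signs do not matter for the basis claim. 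This proves the theorem.
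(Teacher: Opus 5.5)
Your proof is correct and follows the paper's argument. It uses the same long exact sequence of $(\bC P^2,U'(\cA))$, the injectivity of $j$ from Theorem~\ref{thm:homol_1st}, and $\delta[S_{j,k}]=[T_{j,k}]$; in addition, you verify the duality $[S_{j,k}]\cdot[\ell_{j',k'}]=\pm\delta_{(j,k),(j',k')}$, which the paper only asserts as ``easy to see''. In that verification you should also rule out the edge $(H_k,P)$ appearing in $\ell_{j',k'}$ as the edge from $H_{j'}$ to $P$ (i.e.\ $j'=k$); this is immediate, since it would give $k=j'=\min P=j<k$.
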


\proof
By the homology long exact sequence, we have 
\[
0 \to H_3 (\bC P^2, U'(\cA)) \xrightarrow{\delta} 
H_2 (U'(\cA)) \to H_2 (\bC P^2) \xrightarrow{j} H_2 (\bC P^2, U'(\cA)).
\]
By the proof of Theorem \ref{thm:homol_1st}, we have that $j$ is injective. Thus, we have the isomorphism
\[
\delta: H_3 (\bC P^2, U'(\cA)) \cong H_2 (U'(\cA)).
\]
This proves $H_2 (U'(\cA)) \cong \bZ^{|\mathbf{nbc}(\cA)|}$.
Since the connecting homomorphism satisfies $\delta ([S_{j,k}]) = [T_{j,k}]$, we obtain the description of the basis.
\endproof

We also have the following vanishing results for higher homology groups.
\begin{theorem}
For $k \geq 3$, $H_k (U'(\cA))=0$.
\end{theorem}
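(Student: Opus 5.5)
We follow the pattern of the two preceding computations and use the long exact sequence of the pair $(\bC P^2, U'(\cA))$ together with the excision and Poincar\'e--Lefschetz duality isomorphisms
\[
H_k (\bC P^2, U'(\cA)) \cong H_k (N(\cA), \bd N(\cA)) \cong H^{4-k} (N(\cA)) \cong H^{4-k}(V(\cA)),
\]
where the last step uses that $N(\cA)$ deformation retracts onto $V(\cA)$. Since $V(\cA)$ is a $2$-dimensional CW complex, $H^{4-k}(V(\cA)) = 0$ whenever $4-k \geq 3$, that is, whenever $k \leq 1$. The relevant groups for $k \geq 3$ are therefore the relative groups in degrees $k$ and $k+1$, and we treat these by cases.

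For $k \geq 5$, we have $H_k(U'(\cA)) = 0$ because $U'(\cA)$ is a compact $4$-manifold. This also follows from the exact sequence, since both $H_{k+1}(\bC P^2, U'(\cA))$ and $H_k(\bC P^2)$ vanish.

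For $k = 4$, the manifold $U'(\cA)$ is a compact connected $4$-manifold with nonempty boundary $\bd U'(\cA) = -\bd N(\cA)$, so its top homology vanishes. Alternatively, the exact sequence
\[
H_5(\bC P^2, U'(\cA)) \to H_4(U'(\cA)) \to H_4(\bC P^2) \to H_4(\bC P^2, U'(\cA))
\]
gives the same conclusion. The last map is an isomorphism $\bZ \to H_4(N(\cA), \bd N(\cA)) \cong H^0(N(\cA)) \cong \bZ$, because the fundamental class restricts to the relative fundamental class of $N(\cA)$. The first term vanishes, so $H_4(U'(\cA)) = 0$.

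For $k = 3$, consider the exact sequence
\[
H_4(U'(\cA)) \to H_4(\bC P^2) \to H_4(\bC P^2,U'(\cA)) \to H_3(U'(\cA)) \to H_3(\bC P^2) = 0.
\]
By the $k = 4$ case, the map $H_4(\bC P^2) \to H_4(\bC P^2, U'(\cA)) \cong \bZ$ is an isomorphism, hence surjective, so $H_3(U'(\cA)) = 0$.

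The only point needing care is identifying this restriction map on top classes. Under excision, the fundamental class $[\bC P^2]$ maps to the relative fundamental class $[N(\cA), \bd N(\cA)]$, which is dual to $1 \in H^0(N(\cA))$. This holds because $N(\cA)$ is connected, as $V(\cA)$ is. As a cross-check, Lefschetz duality gives $H_3(U'(\cA)) \cong H^1(U'(\cA), \bd U'(\cA))$. The long exact sequence of the pair $(U'(\cA), \bd U'(\cA))$ then shows this vanishes, because $H^0(U'(\cA)) \to H^0(\bd U'(\cA))$ is injective ($\bd N(\cA)$ is connected and $U'(\cA)$ is connected) and $H^1(U'(\cA)) \to H^1(\bd U'(\cA))$ is injective.
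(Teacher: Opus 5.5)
Your proof is correct and follows the paper's route. Both arguments use the same segment of the long exact sequence of $(\bC P^2, U'(\cA))$ together with $H_4(\bC P^2, U'(\cA)) \cong H^0(N(\cA)) \cong \bZ$. The only difference is that you show $H_4(\bC P^2) \to H_4(\bC P^2, U'(\cA))$ is an isomorphism by tracking the fundamental class. The paper instead records only injectivity and rules out a finite cyclic cokernel using that $H_3$ of a compact $4$-manifold with boundary is free.

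Two small points in your optional cross-check. First, what you need there is surjectivity, not injectivity, of $H^0(U'(\cA)) \to H^0(\bd U'(\cA))$; this holds since both groups are $\bZ$. Second, you assert injectivity on $H^1$ without justification; it holds because the meridians $m_i$ lie in $\bd U'(\cA)$ and generate $H_1(U'(\cA))$.
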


\proof
Since $U'(\cA)$ is a compact $4$-manifold with boundary, it follows that $H_{k} (U'(\cA)) = 0$ for $k \geq 4$ and $H_3 (U'(\cA))$ is a free module.
Moreover, we have the following exact sequence:
\[
0 \to H_4 (\bC P^2) \xrightarrow{j} H_4 (\bC P^2, U'(\cA)) \to H_3 (U'(\cA)) \to H_3 (\bC P^2)=0.
\]
Now, we have $H_4 (\bC P^2 , U'(\cA)) \cong H^{0} (N(\cA))=\bZ$ and $j$ is injective.
Since $H_3 (U'(\cA))$ is a free module, it follows that $H_3 (U'(\cA))=0$.
\endproof

To summarize, we have the following formula for the Betti numbers of the complement.
\begin{corollary}\label{cor:poincarepoly}
\[
\overline{\pi}(\cA,t) = \sum_{i \geq 0} b_i (U(\cA))t^{i}.
\]
\end{corollary}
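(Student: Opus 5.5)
The corollary follows by assembling the homology computations of this section and matching them against Proposition \ref{prop:exp_poin}. Since $U'(\cA)$ is a deformation retract of $U(\cA)$, we have $b_i(U(\cA)) = b_i(U'(\cA))$ for all $i$, so it suffices to compare the ranks for $U'(\cA)$ degree by degree.

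\textbf{Low and high degrees.} In degree $0$, $U'(\cA)$ is connected: it is the complement of a codimension-two subcomplex of the connected $4$-manifold $\bC P^2$. Hence $b_0 = 1$, which is the constant term of $\overline{\pi}(\cA,t)$. In degree $1$, Theorem \ref{thm:homol_1st} gives $b_1 = n$, the coefficient of $t$. For $i\geq 3$, the vanishing theorem gives $b_i = 0$, and $\overline{\pi}(\cA,t)$ has degree at most $2$, so these coefficients also agree.

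\textbf{Degree two.} The only step needing care is $b_2 = |\mathbf{nbc}(\cA)|$, which we must show equals $\sum_{P \in \Sing(\cA)\setminus \Sing_{0}(\cA)} (m_P - 1)$. I would count directly. Each $P \notin \Sing_0(\cA)$ lies on exactly $m_P$ lines $H_i$, all with $i\geq 1$. The nbc pairs attached to $P$ are the pairs $(\min P, k)$ with $H_k \ni P$ and $k \neq \min P$, so $P$ contributes exactly $m_P - 1$ of them.

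Conversely, if $(j,k)$ is nbc, then $P = H_j\cap H_k$ cannot lie on $H_0$. Otherwise $\min P = 0 \neq j$, which contradicts the definition. Each nbc pair also determines $P$ uniquely, so these contributions account for every nbc pair exactly once.

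This is the same bijection as the one with the edge set $C$ used in computing $b_1(\Gamma(\cA))$. Applying Proposition \ref{prop:exp_poin} with $H = H_0$ then yields the formula. The only potential subtlety is confirming connectedness for $b_0$, which is standard.
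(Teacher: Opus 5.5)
Your proposal is correct and follows the paper's route. The paper also obtains the corollary by combining the computed ranks $b_1 = n$, $b_2 = |\mathbf{nbc}(\cA)|$ and $b_k = 0$ for $k \geq 3$ with Proposition \ref{prop:exp_poin} for $H = H_0$; your count $|\mathbf{nbc}(\cA)| = \sum_{P \in \Sing(\cA)\setminus \Sing_0(\cA)} (m_P - 1)$ is the bijection with the edge set $C$ of $\Gamma(\cA)$ that the paper uses, and your connectedness check for $b_0$ supplies a point the paper leaves implicit.
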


\section{Cohomology rings}\label{sec:cohom}
In this section, we compute the cohomology ring and prove that it is isomorphic to the Orlik-Solomon algebra.
Here, we also work with the exterior $U'(\cA)$.
We give two ways to compute the cohomology ring; the first is by the intersection product in relative homology, and the second is by direct computations.

\subsection{A proof by the homology intersection product.}
Here, we compute the cohomology ring by the homology intersection product. 
Instead of computing the cohomology ring directly, we will compute the homology intersection ring $H_{4-*} (U', \bd U')$ (see Section \ref{sec:app}) here.
Since the non-trivial cup product is $H^1 (U') \times H^{1} (U') \to H^2 (U')$, it suffices to compute the intersection product $H_3 (U', \bd U') \times H_3 (U', \bd U') \to H_2 (U', \bd U')$.

\subsubsection{Basis of $H_3 (U', \partial U')$}
We will construct the basis of $H_3 (U', \partial U')$ as the intersection dual of $H_1 (U')$. Precisely, we will construct a compact $3$-manifold with boundary $G_{i}$ such that 
\begin{itemize}
\item $G_{i} \cap \partial U' = \partial G_{i}$, and 
\item $[G_{i}] \cdot [m_{j}] = \delta_{i,j}$.
\end{itemize}
Then, $\{[G_1], \dots, [G_n]\} \subset H_3 (U', \partial U')$ forms a basis of $H_3 (U', \partial U')$.

First, we will construct a closed surface $F_{i}$ embedded in $\partial U'$, which will be the boundary $\partial G_{i}$ of the relative cycle.
The surface $F_{i}$ is constructed from small pieces in each $X_{i}$ and $Y_{P}$, and by gluing them step by step as in Figure \ref{fig:step}.

\begin{figure}[htbp]
\centering
\begin{tikzpicture}
\coordinate (A) at (0,0);

\draw (A)++(-0.5,0) --++(5,0)node[right]{$H_i$};
\draw (A)++(-0.2,-0.3) --++(2.4,3.6) ++ (0.2,0)node[above]{$H_j$};
\draw (A) ++(4,0)++(0.2,-0.3) --++(-2.4,3.6) ++ (-0.2,0)node[above]{$H_0$};

\draw (A) circle (0.2);
\draw (A)++(4,0) circle (0.2);
\draw (A) ++ (2,3) circle (0.2);

\draw[<-] (A) ++ (2,0)--++(0,-0.5)node[below]{(1) $F_{i,1,+}$};
\draw[<-] (A) ++ (3,1.5)--++(0.5,0)node[right]{(2) $F_{i,1,-}$};
\draw[<-] (A) ++ (3.8,0.1)to[out=130,in=180]++(0.8,0.5)node[right]{(3) $F_{i,2,P_0}$};
\draw[<-] (A) ++ (0.2,0.1)to[out=50,in=0]++(-0.8,0.5)node[left]{(4) $F_{i,2,P}$};
\draw[<-] (A) ++ (2,2.8)to[out=-90,in=-90]++(-1,0)node[above]{(5) $F_{i,2,P}$};
\draw[<-] (A) ++ (1,1.5)--++(-0.5,0)node[left]{(6) $F_{i,3,j}$};

%\draw (A)++(0.1,0) ++ (0.3,0.5) --++(1,1.5);

\end{tikzpicture}
\caption{Schematic picture of the construction of $F_i$}
\label{fig:step}
\end{figure}

\begin{enumerate}[(1)]
\item 
\underline{In the piece $X_i$.}
Recall that $X_{i}$ is the total space of a trivial $S^1$-bundle over 
$H_{i} \setminus \bigcup_{\Sing_{i}(\cA)} (B_P \cap H_{i})$.
We take a section of this bundle, and denote it by $F_{i,1,+}$.
The piece $F_{i,1,+}$ is a compact genus-zero surface with $n_{i}$ boundary components. 
Since the Euler number of the bundle $\pi_{i}$ is $+1$, the section can be taken so that each boundary component is represented by $\lambda_{i, P}$ in $Y_P$, which is parallel with a circle $\partial (B_P \cap H_{i})$ in the boundary torus $\partial (B_P \cap H_{i}) \times S^1$ if $P \neq P_0$, and the boundary component in $\bd B_P$ is a cycle represented by $\lambda_{i, P_0} + \mu_{i, P_0}$ in $Y_P$. 
Here, we give the orientation of $F_{i,1,+}$ so that $[F_{i,1,+}] \cdot [m_{i}] = +1$.

\item 
\underline{In the piece $X_0$.}
Similarly, we take a section of the trivial $S^1$-bundle $X_{0} \to H_0 \setminus \bigcup_{P \in \Sing_{0} (\cA)} (B_P \cap H_0)$, and denote it by $F_{i,1,-}$.

\item 
\underline{In the piece $Y_{P_0}$.}
Let $P_0 = H_{i} \cap H_0$. By the steps (1) and (2), the circle $Y_{P_0} \cap F_{i,1,+}$ is represented by $\lambda_{i, P_0} + \mu_{i, P_0}$ in the torus $T_{i}^{P_0}$.
Similarly, $Y_{P_0} \cap F_{i,1,-}$ is represented by $\lambda_{0,P_0} + \mu_{0,P_0}$.
For any pair $H$ and $H'$ in $\cA_{P_0}$, the linking number $lk (K_{H, P_0} \cap K_{H', P_0})$ is $+1$ in $\bd B_P$. 
Therefore, by definition of the linking number, we have 
\[
\lambda_{H,P_0} + \mu_{H,P_0} = \left ( \sum_{H' \in \cA_{P_0}, H' \neq H} lk (K_{H,P_0},  K_{H',P_0})\mu_{H',P_0} \right ) + \mu_{H,P_0} = \sum_{H' \in \cA_{P_0}} \mu_{H',P_0} 
\]
for any $H \in \cA_{P_0}$. In particular, $\lambda_{i,P_0} + \mu_{i,P_0}$ and $\lambda_{0,P_0} + \mu_{0,P_0}$ represent the same homology class in $H_{1} (Y_{P_0}; \bZ)$. Therefore, there exists a compact surface $F_{i,2,P_0}$ in $Y_{P_0}$ such that $\bd F_{i,2,P_0} = (Y_{P_0} \cap F_{i,1,+}) \cup (- (Y_{P_0} \cap F_{i,1,-}))$ (as an oriented manifold).

Therefore, for the glued surface $F_{i,1,+} \cup F_{i,2, P_0} \cup F_{i,1,-}$ to admit an orientation compatible with that of $F_{i,1,+}$, the piece $F_{i,1,-}$ must be given the orientation opposite to its usual one, so that $F_{i,1,-} \cdot m_{0} =-1$.

\item 
\underline{In the piece $Y_P$ ($P \in \Sing_{i} (\cA) \setminus \{P_0\}$).}
Let $P \in \Sing_{i} (\cA) \setminus \{P_{0}\}$. 
The intersection $Y_P \cap F_{i,1,+}$ is the longitude $\lambda_{i,P}$ of the torus $T_{i}^{P}$ in $Y_P$.
Since $K_{i}$ is a trivial knot, there exists a disk $D_{i,P}$ in $\bd B_P \setminus N_{H_i}$ whose boundary is represented by $\lambda_{i,P}$.
The disk $D_{i,P}$ is oriented by extending that of $F_{i,1,+}$.
Let $F_{i,2,P} = D_{i,P} \cap Y_{P}$. 
The piece $F_{i,2, P}$ is a compact genus-zero surface with boundary. One of the boundary components is represented by the longitude $\lambda_{i, P}$. 
Another boundary component is described as $\partial (D_{i,P} \cap N_{H_j})$ for $H_{j} \neq H_{i}$. Let $c_{j,i,P} = \partial (D_{i,P} \cap N_{H_j})$.
This is (possibly the disjoint union of) meridian cycles around $H_j$ in $Y_{P}$. 
However, since $lk (K_{i,P}, K_{j,P})=+1$, $[c_{j,i,P}] = +\mu_{j,P}$ in $H_{1} (Y_P, \bZ)$.
The boundary of $F_{i,2,P}$ is described as $\partial F_{i,2,P} = \lambda_{i,P} \cup \bigcup_{H_{j} \in \cA_{P} \setminus \{H_{i}\}}c_{j,i,p}$.
See Figure \ref{fig:i2p}.

\item 
\underline{In the piece $Y_P$ ($P \in \Sing_{0} (\cA) \setminus \{P_0\}$).}
We perform an analogous construction for each $P \in \Sing_{0} (\cA) \setminus \{P_0\}$, and obtain a surface $F_{i,2,P}$ ($P\in \Sing_{0} (\cA)$) whose boundary is $\partial F_{i,2,P} = \lambda_{0,P} \cup \bigcup_{H_{j} \in \cA_{P} \setminus \{H_{0}\}} c_{j,0,P}$.

\item 
\underline{In the piece $X_j$.}
The union of the pieces obtained above 
\[
F_{i,1,+} \cup F_{i,1,-} \cup F_{i,2,P_0} \cup \bigcup_{P \in \Sing_{i}(\cA)\setminus \{P_0\}} F_{i,2,P} \cup \bigcup_{P \in \Sing_{0}(\cA) \setminus \{P_0\}} F_{i,2,P}
\]
is a compact surface with the boundary $c_{j,i,P}$ ($P_{0} \notin H_{j}$). This is because the surfaces of $F_{1,*}$ and $F_{2,*}$ are glued along the longitude cycle $\lambda$'s.

Here, we set $P=H_i \cap H_j$ and $Q= H_0 \cap H_j$. In $X_{j}$, each component of $c_{j, i, P}$ is a meridian cycle, that is a circle homologous to the fiber of the $S^1$-bundle $\pi_{j}$ over some point in $\bd (H_{j} \cap B_{H_i \cap H_j})$. 
$\pi_j (c_{j, i, P})$ is a finite point set and since $[c_{j, i, P}] = + \mu_{j, P}$ in the homology class, we have that the signed sum of all the points in $\pi_{j} (c_{j, i, P})$ is $+1$, in other words, $[\pi_{j} (c_{j, i, P})] = +1 \in H_0 (H_{j};\bZ)$. 
Similarly, since $F_{i,1,-}$ has the opposite orientation, we have that the sum of signs in $\pi_{j} (c_{j,0, Q}) = -1$. 
Therefore the sum $[\pi_{j} (c_{j,i,P})] + [\pi_{j}(c_{j,0,Q})] = 0$ in $H_0 (H_{j}; \bZ)$. Thus,
there exists a compact oriented $1$-dimensional manifold $\Gamma_{i,j}$ such that $\bd (\Gamma_{i,j}) = \pi_{j} (c_{j,i,P}) \cup \pi_{j}(c_{j,0,Q})$. 
Note that $\Gamma_{i,j}$ may contain a circle component and when $H_{j} \in \cA_{P_0}$, $\Gamma_{i,j}$ is a closed manifold. 
Finally, we define the piece by $F_{i,3,j} = \pi_{j}^{-1} (\Gamma_{i,j})$. Then, $\bd F_{i,3,j} = c_{j,i,P} \cup c_{j,0,Q}$. 
By gluing this piece to the above piece, we can close the boundary components.
Finally, we denote the resulting surface by $F_{i}$, that is,
\[
F_{i} = F_{i,1,+} \cup F_{i,1,-} \cup F_{i,2,P_0} \cup \bigcup_{P \in \Sing_i (\cA)\setminus \{P_0\}} F_{i,2,P} \cup \bigcup_{P \in \Sing_{0} (\cA)\setminus \{P_0\}} F_{i,2,P} \cup \bigcup_{j \neq i} F_{i,3,j}.
\]
We give the orientation of the surface $F_{i}$ by extending it in the piece $F_{i,1,+}$.
\end{enumerate}

\begin{figure}[htbp]
\centering
\begin{tikzpicture}
\coordinate (A) at (0,0);
\coordinate (B) at (5,0);

\draw[thick] (A) ellipse (2 and 0.5);
\fill[pattern=north east lines] (A) ellipse (2 and 0.5);
\draw[preaction={draw=white,line width=4pt}] (A) ++ (1.2,2) --++(0,-2);
\draw[preaction={draw=white,line width=4pt}] (A) ++ (-1.2,2) --++(0,-2);
\draw[preaction={draw=white,line width=4pt}] (A)++ (-0.4,0) to[out=90,in=180]++(0.4,1) to[out=0,in=90]++(0.4,-1);
\draw (A)++(1.2,-0.6) --++(0,-1);
\draw (A)++(-1.2,-0.6) to[out=-90,in=180]++(0.4,-0.5) to[out=0,in=-90]++(0.4,0.5);
\draw (A)++(0.4,-0.6) --++(0,-1);

\draw (A)++(-1.2,2) node[above]{$K_{j}$};
\draw (A)++(1.2,2) node[above]{$K_{k}$};
\draw (A) ++ (1.3,-0.4)--++(0.3,-0.3) node[right]{$\lambda_{i,P}$};
\draw (A) ++ (-0.9,-0.2) to[out=180,in=0] ++(-0.3,-0.3)node[left]{$D_{i,P}$};

\draw[thick] (B) ellipse (2 and 0.5);
\fill[pattern=north east lines] (B) ellipse (2 and 0.5);
\draw [thick](B) ++ (1.2,0) ellipse (0.3 and 0.1);
\draw [thick](B) ++ (0.4,0) ellipse (0.3 and 0.1);
\draw [thick](B) ++ (-0.4,0) ellipse (0.3 and 0.1);
\draw [thick](B) ++ (-1.2,0) ellipse (0.3 and 0.1);
\fill[white] (B) ++ (1.2,0) ellipse (0.3 and 0.1);
\fill[white] (B) ++ (0.4,0) ellipse (0.3 and 0.1);
\fill[white] (B) ++ (-0.4,0) ellipse (0.3 and 0.1);
\fill[white] (B) ++(-1.2,0) ellipse (0.3 and 0.1);

\draw (B) ++ (-0.4,1.2) --++(-0.6,-1.1);
\draw (B) ++ (-0.4,1.2) --++(0.2,-1.1);
\draw (B) ++ (-0.4,1.2) --++(0.6,-1.1);
\draw[preaction={draw=white,line width=4pt}] (B) ++ (1.2,2) --++(0,-2);
\draw[preaction={draw=white,line width=4pt}] (B) ++ (-1.2,2) --++(0,-2);
\draw[preaction={draw=white,line width=4pt}] (B)++ (-0.4,0) to[out=90,in=180]++(0.4,1) to[out=0,in=90]++(0.4,-1);
\draw (B)++(1.2,-0.6) --++(0,-1);
\draw (B)++(-1.2,-0.6) to[out=-90,in=180]++(0.4,-0.5) to[out=0,in=-90]++(0.4,0.5);
\draw (B)++(0.4,-0.6) --++(0,-1);

\draw (B)++(-1.2,2) node[above]{$K_{j}$};
\draw (B)++(1.2,2) node[above]{$K_{k}$};
\draw (B) ++ (0.8,-0.4)--++(0.3,-0.3) node[right]{$\lambda_{i,P}$};
\draw (B) ++ (-0.9,-0.2) to[out=180,in=0] ++(-0.3,-0.3)node[left]{$F_{i,2,P}$};
\draw (B) ++ (1.4,0.1) to[out=90,in=180] ++(0.5,0.5)node[right]{$c_{k,i,P}$};
\draw (B) ++ (-0.4,1.2) node[above]{$c_{j,i,P}$};

\end{tikzpicture}
\caption{The piece $F_{i,2,P}$. In this figure, $P = H_i \cap H_j \cap H_k$.}
\label{fig:i2p}
\end{figure}

\begin{remark}
Precisely, the choice of each $1$-manifold $\Gamma_{i,j}$ is defined later with the construction of the manifold $G_{i}$ whose boundary is $F_{i}$.
\end{remark}

\begin{proposition}\label{prop:dual_bd}
The closed surface $F_{i}$ satisfies $F_{i} \cdot m_{j} = \delta_{i,j}$.
\end{proposition}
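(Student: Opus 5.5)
The intersection number $F_i \cdot m_j$ is computed inside the closed oriented $3$-manifold $\bd U' = -\bd N(\cA)$. The plan is to put the meridian cycle $m_j$ in a convenient position and count signed intersection points locally. Since $m_j$ is the boundary of a fiber disk $D_j$ of $\pi_j$, it is a fiber circle of the $S^1$-bundle $X_j$. By the computation in Theorem \ref{thm:homol_1st}, its homology class in $\bd U'$ does not depend on the chosen point of $H_j$. We may therefore isotope $m_j$ within $X_j$ to the fiber over a generic point $x_j \in H_j \setminus \bigcup_{P} B_P$. We choose $x_j$ away from the finitely many points of $\pi_j(c_{j,\ast,\ast})$ and transverse to the $1$-manifold $\Gamma_{i,j}$. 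Since $F_i \cdot m_j$ depends only on homology classes, it suffices to count intersections with this fiber. The only pieces of $F_i$ that meet $X_j$ are $F_{i,1,+}$ (when $j=i$), $F_{i,1,-}$ (when $j=0$) and $F_{i,3,j}$ (when $j\neq i,0$). The pieces $F_{i,2,P}$ lie in the $Y_P$ and are disjoint from the fiber.

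\textbf{Case $j=i$.} The fiber over $x_i$ meets the section $F_{i,1,+}$ in exactly one point. The sign is $+1$ by the very choice of orientation of $F_{i,1,+}$, namely $[F_{i,1,+}]\cdot[m_i]=+1$. Since the orientation of $F_i$ extends that of $F_{i,1,+}$, this gives $F_i\cdot m_i=1$. Here we must check that no piece $F_{i,3,i}$ is present; this holds because the union defining $F_i$ only uses $F_{i,3,j}$ with $j\neq i$.

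\textbf{Case $j\neq i$, $j\ge 1$.} The fiber over $x_j$ meets $F_{i,3,j}=\pi_j^{-1}(\Gamma_{i,j})$ in the preimage of the points $\Gamma_{i,j}\cap\{x_j\}$. A $1$-manifold inside $H_j$ generically misses a chosen point. Equivalently, we can push $x_j$ off $\Gamma_{i,j}$ within the region of $H_j\setminus\bigcup B_P$, which is connected. Then the intersection is empty, so $F_i\cdot m_j=0$. The one subtlety is whether moving $x_j$ off $\Gamma_{i,j}$ is legitimate. It is, because every fiber over $H_j\setminus\bigcup B_P$ represents the same class, as noted above.

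\textbf{Main obstacle.} The delicate point is consistency: the signed count must be independent of the chosen fiber. This forces $\bd F_i=\emptyset$, which in turn relies on the sign bookkeeping done in steps (3)--(6). In particular, $F_{i,1,-}$ has the reversed orientation, giving $F_{i,1,-}\cdot m_0=-1$, and the boundary of $\Gamma_{i,j}$ has vanishing signed sum. I would first verify that $F_i$ is a closed oriented surface, so that intersection with the homology class $[m_j]$ is well defined. The argument above then applies to generic representatives, and for $j\neq i$ we use a fiber disjoint from $\Gamma_{i,j}$. As a consistency check, pairing $F_i$ with the relation $m_0=-(m_1+\dots+m_n)$ from Theorem \ref{thm:homol_1st} yields $F_i\cdot m_0=-1$, which agrees with the direct count on $F_{i,1,-}$.
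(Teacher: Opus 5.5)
Your proposal is correct and follows essentially the same argument as the paper: realize $m_j$ as a fiber of $X_j$. For $j=i$ this fiber meets the section $F_{i,1,+}$ exactly once with sign $+1$ by the chosen orientation, and for $j\neq i$ it can only meet $F_{i,3,j}=\pi_j^{-1}(\Gamma_{i,j})$, which you avoid by choosing the base point off $\Gamma_{i,j}$. (One small point: independence of the fiber's class in $H_1(\bd U')$ follows from the base of $X_j$ being connected, not from Theorem \ref{thm:homol_1st}, which concerns $H_1(U')$; you also give this correct reason later.)
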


\proof
In the piece $X_i$, the meridian $m_i$ is seen as the fiber of the $S^1$-bundle over some point. By step (1), $F_{i,1,+}$ is a section of this bundle. Thus, $F_{i}$ intersects $m_i$ transversally at once.
If any other meridians appear, then they can appear only in the piece $X_{j}$ in step (6). 
However, we can choose the center of the meridian disk of $H_{j}$ so that it does not lie on the $1$-manifold $\Gamma_{i,j}$. With this choice, the piece $F_{i,3,j}$ does not meet the meridian $m_{j}$.
\endproof

Next, we will construct the $3$-manifold $G_{i}$ in $U'(\cA)$ with $\partial G_{i} = F_{i}$.

Let us define the piece
$S_{i,+} := F_{i,1,+} \cup \bigcup_{P \in \Sing_{i}(\cA) \setminus \{P_0\}} D_{i,P}$. 
This piece $S_{i,+}$ is a disk embedded in $X= \bC P^2 \setminus \Int (B_{P_0})$ with the boundary represented by $\lambda_{i, P_0} + \mu_{i,P_0}$. Similarly, we define 
$S_{i,-} = F_{i,1,-} \cup \bigcup_{P \in \Sing_{0}(\cA) \setminus \{P_0\}} D_{0,P}$.
Connecting $S_{i,+}$ and $S_{i,-}$ by a surface $F_{i,2,P_{0}}$, we obtain an embedded closed surface in $X$. Moreover, by pushing this surface into $\Int (X)$ along the collar neighborhood of the surface $F_{i,2,P_0}$, we have a closed surface $S_{i}=S_{i,+} \cup F_{i,2,P_0} \cup S_{i,-}$ embedded in $\Int (X)$.  
Similarly to $F_{i,1,-}$, around $S_{i,-} \subset S_{i}$, it has the orientation opposite to the usual one such that $S_{i,-} \cdot m_{0} =-1$.

The surface $S_i$ represents a second homology class $[S_i] \in 
H_2 (X; \bZ)$.
Since $X$ is the complement of a $4$-ball in $\bC P^2$, $X$ is the total space of the $D^2$-bundle over $\bC P^1$ with the Euler number $+1$.
The base space can be taken as a complex line $L \cong \bC P^1$ that does not pass through $B_{P_0}$.
Now, the intersection number is
\[
[S_{i}] \cdot [L] = [S_{i,+}] \cdot [L] + [F_{i,2,P_0}] \cdot [L] + [S_{i,-}] \cdot [L] 
= 1+0 -1 =0.
\]
Thus, $[S_i] =0 \in H_2 (X; \bZ)$. 
Therefore, there exists a compact orientable $3$-manifold $G'_{i}$ in $X$ such that $\bd G'_i = S_i$ (see \cite[Proposition 27.5]{rani}).

This manifold $G'_i$ may intersect other lines $H \in \cA$, that is $G'_{i}$ may not be a $3$-manifold \textit{in} $U'(\cA)$. 

We can generically assume that $G'_{i}$ and each line $H_j \in \cA$ intersect transversally (see \cite{gul-pol} and \cite[Chapter 9]{fre-qui} for a locally flat version). Each intersection $G'_{i} \cap H_j$ is a $1$-dimensional manifold. Let us denote this $1$-manifold by $\Gamma_{i,j} = G'_{i} \cap H_j$, which will play the role of the piece that appeared in step (vi).  
Moreover, by a small perturbation if necessary, we can assume that $\Gamma_{i,j} \subset H_j \setminus \bigcup_{P \in \Sing_{j} (\cA)} (H_j \cap B_P)$. Since $H_j$ is a closed manifold, we have
$\bd (\Gamma_{i,j}) = \bd G'_i \cap H_j$.
Since $\bd G'_{i} = S_{i,+} \cup F_{i,2,P_0} \cup S_{i,-}$ and by definition, 
\begin{eqnarray*}
\bd G'_{i} \cap H_j = 
\left \{ 
\begin{array}{ll}
(D_{i,H_{i} \cap H_j} \cap H_j) \cup (D_{0,H_0 \cap H_j} \cap H_j) & (H_j \notin \cA_{P_0}), \\
\emptyset & (H_j \in \cA_{P_0}).
\end{array}
\right .
\end{eqnarray*}
Thus, if $H_j \notin \cA_{P_0}$, then the intersection $\Gamma_{i,j}$ contains a $1$-manifold with boundary, and their boundary points are lying in 
$D_{i,H_{i} \cap H_j} \cap H_j$ and $D_{0,H_0} \cap H_j$. 

Finally, by removing tubular neighborhood $\pi_{j}^{-1} (\Gamma_{i,j})$ of $\Gamma_{i,j}$ which is identified with $\cong \Gamma_{i,j} \times D^2$ from $G'_i$ for each $j$, we obtain a $3$-manifold $G_{i}$. 
We orient $G_i$ so that the boundary orientation induced on $\bd G_{i}$ agrees with the orientation of $F_{i}$ fixed before.

\begin{proposition}
The manifold $G_{i}$ satisfies $G_{i} \cap \bd U' = \bd G_i$ and $G_{i} \cdot m_{j} = \delta_{i,j}$, that is, $[G_{i}] \in H_3 (U', \bd U')$ is the intersection dual of $m_{i}$.
\end{proposition}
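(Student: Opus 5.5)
We need to verify two things: that $G_i$ meets $\bd U'$ exactly in its boundary $F_i$, and that its intersection numbers with the meridians are $\delta_{i,j}$.

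\textbf{Step 1: $G_i$ lies in $U'$ and $G_i \cap \bd U' = \bd G_i$.} Start from $G'_i \subset X$ with $\bd G'_i = S_i$, and make $G'_i$ transverse to each $H_j$, so that $G'_i \cap H_j = \Gamma_{i,j}$. Since $\Gamma_{i,j}$ avoids the balls $B_P$, we may shrink $\varepsilon_1$ and apply a collar argument near $\Gamma_{i,j}$. Then $G'_i \cap N_{H_j}$ is exactly the disk-bundle neighborhood $\pi_j^{-1}(\Gamma_{i,j}) \cong \Gamma_{i,j}\times D^2$, away from the parts of $S_i$ already lying in $\bd N(\cA)$.

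Removing these neighborhoods for all $j$ gives $G_i$. Its boundary is
\[
(S_i \setminus \textstyle\bigcup_j \pi_j^{-1}(\Gamma_{i,j})) \cup \bigcup_j \pi_j^{-1}(\Gamma_{i,j})|_{\bd},
\]
where the second term is the circle bundle $F_{i,3,j}$ over $\Gamma_{i,j}$. The pieces $D_{i,P}$ and $D_{0,P}$ of $S_i$ get cut along their intersections with $N_{H_j}$, which leaves exactly $F_{i,2,P}$. The pieces $F_{i,1,\pm}$ and $F_{i,2,P_0}$ already lie in $\bd N(\cA)$. Finally, $G'_i$ meets $B_{P_0}$ only along the collar of $F_{i,2,P_0}$; I would arrange this by pushing the surface and then reattaching the collar.

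Hence $\bd G_i = F_i$, and $\Int G_i$ misses $N(\cA)$. This last point uses transversality: after removing the tubes, $G'_i$ meets $N(\cA)$ only in $\bd G_i$.

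\textbf{Step 2: the boundary map.} The relative class $[G_i]$ has $\bd[G_i] = [F_i] \in H_2(\bd U')$. We want
\[
[G_i]\cdot[m_j] = \delta_{i,j}.
\]
Since $m_j \subset \bd U'$, push $m_j$ slightly into the interior of $U'$ along the inward collar. The intersection $G_i \cdot m_j$ in $U'$ then equals, up to the boundary orientation sign convention (outward normal first), the intersection of $\bd G_i = F_i$ with $m_j$ inside $\bd U'$. By Proposition \ref{prop:dual_bd}, the latter is $\delta_{i,j}$.

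\textbf{Step 3: basis.} By Lefschetz duality, $H_3(U',\bd U') \cong H^1(U') = \Hom(H_1(U'),\bZ)$. The pairing is the intersection pairing, and Theorem \ref{thm:homol_1st} says $\{m_j\}$ is a basis of $H_1(U')$. So $\{[G_i]\}$ is the dual basis.

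\textbf{Main obstacle.} I expect the delicate point to be the sign and orientation bookkeeping in Step 2: checking that the orientation of $G_i$ induced from $F_i$, combined with the collar push-off, gives $+\delta_{i,j}$ rather than $-\delta_{i,j}$. I would settle this by a local model near one transverse point of $F_{i,1,+}\cap m_i$ in $X_i \times [0,\epsilon)$. Using $\bd U' = -\bd N(\cA)$ and the outward-normal-first convention, the local sign can be read off directly; if it came out negative, it would be absorbed into the chosen orientation of $G_i$. A secondary technical point is the locally flat transversality used in Step 1, which I would cite from Freedman–Quinn.
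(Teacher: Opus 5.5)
Your proposal is correct and follows essentially the same route as the paper. You remove the tubes $\pi_j^{-1}(\Gamma_{i,j})$ from $G'_i$, so that the new boundary pieces $\Gamma_{i,j}\times S^1$ reproduce $F_{i,3,j}$ and $\bd G_i = F_i \subset \bd U'$; you then reduce $G_i\cdot m_j$ to $F_i\cdot m_j=\delta_{i,j}$ via the collar and Proposition~\ref{prop:dual_bd}.

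Your sign worry is unfounded. With $U'$ and $G_i$ both oriented outward-normal-first, the frame $(\nu_{\mathrm{out}}, TF_i, Tm_j)$ compared with $(\nu_{\mathrm{out}}, T\bd U')$ gives exactly $+F_i\cdot m_j$. So no reorientation of $G_i$ is needed, and a reorientation would in any case break $\bd G_i = F_i$.
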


\proof
Since we have 
\[
\pi_{j}^{-1}(\Gamma_{i,j}) \cap \partial G'_{i} = \pi^{-1}_{j} (\pi_{j} (c_{j,i,P}) ) = \bigcup_{j} (\partial \Gamma_{i,j} \times D^2),
\]
the boundary of $G_{i}$ satisfies
\[
\partial G_{i} = ( S_{i} \setminus \bigcup_{j} (\partial \Gamma_{i,j} \times D^2) ) \cup \bigcup_{j} (\Gamma_{i,j} \times S^1), 
\]
(the piece $\Gamma_{i,j} \times S^1$ corresponds to the piece $F_{i,3,j}$ which appeared in step (6)). 
The piece $\Gamma_{i,j} \times S^1$ is contained in $X_{j}$. Thus, $\bd G_{i}$ is contained in $\bd U'$. Moreover, if necessary, we can perturb $G_{i}$ so that its interior does not intersect with $\bd U'$. Thus, $G_{i}$ defines a relative $3$-cycle $[G_i] \in H_{3} (U', \bd U')$. 

Since the meridian cycle is contained in $\bd U'$ and the outward normal first convention, $G_i \cdot m_j = \bd G_{i} \cdot m_{j}$. By Proposition \ref{prop:dual_bd}, we have $\bd G_{i} \cdot m_{j} = \delta_{i,j}$. Thus $G_{i}$ is the intersection dual.
\endproof

\subsubsection{Basis of $H_2 (U', \partial U')$}
Before constructing the basis of the second relative homology group, we now examine the local intersection signs used below.
Let $P = H_i \cap H_j$. Since $H_i$ and $H_j$ intersect transversally and positively at $P$, 
we may choose an orientation-preserving local identification around $B_P$ with complex coordinates $(z_i, z_j)$ such that $H_i = \{z_i = 0\}$ and $H_j = \{z_j=0\}$.
We also give polar coordinates $z_i = r_i e^{\sqrt{-1}\theta_i}$ and $z_j = r_j e^{\sqrt{-1} \theta_j}$.
The positive directions of the meridians $\mu_{i,P}$ and $\mu_{j,P}$ are given by $\theta_i$ and $\theta_j$, respectively. We orient the tori $T_i^{P}$ and $T_j^P$ so that $(\theta_j, \theta_i)$ and $(\theta_i, \theta_j)$ are positive ordered bases, respectively.
Note that this agrees with the orientation defined so far, such that $(\lambda_{H,P}, \mu_{H,P})$ is the positive direction for $T_{H}^{P}$.
Now, the disk $D_{i,P}$ is oriented so that $\partial D_{i,P} = K_{i,P}$ as oriented manifolds. 
Thus, since $lk (K_{i,P}, K_{j,P}) = +1$, we have $[D_{i,P}] \cdot [K_{j,P}] = +1$. 
By the transverse intersection orientation convention, we have $([F_{i,2,P} \cap F_{j,2,P}]) \cdot [T_{j}^P] =([D_{i,P} \cap D_{j,P}]) \cdot [T_j^P]= +1$ and similarly $([F_{i,2,P} \cap F_{j,2,P}]) \cdot [T_{i}^P] = -1$.

Let us turn to the construction of the basis of the relative homology. After a small perturbation, we may assume that the relative $3$-cycles $G_{i}$ intersect transversally. We define
\[
M_{i,j} = G_i \cap G_{j},
\]
and give it the standard transverse intersection orientation (note that $M_{i,j} = -M_{j,i}$ as oriented manifolds). 

\begin{proposition}\label{prop:dual_2nd}
$M_{i,j}$ is the intersection dual of $T_{i,j}$. Thus, $\{[M_{i,j}] \mid (i,j) \in \mathbf{nbc} (\cA)\}$ is a basis of $H_2 (U', \bd U')$.
\end{proposition}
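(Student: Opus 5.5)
To prove the statement, I will show that $[M_{i,j}]\cdot[T_{k,l}]=\delta_{(i,j),(k,l)}$ for all nbc pairs $(i,j),(k,l)$, up to a fixed global sign convention. Since $\{[T_{k,l}]\}$ is a basis of $H_2(U')$ by the earlier theorem, and the intersection pairing $H_2(U',\bd U')\times H_2(U')\to\bZ$ is perfect by Poincar\'e--Lefschetz duality (all groups involved are free), this shows that the classes $[M_{i,j}]$, $(i,j)\in\mathbf{nbc}(\cA)$, form the dual basis. This gives the claim.

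The key reduction is to push the computation into the boundary. Each nbc torus $T_{k,l}$ lies in $\bd U'$; more precisely it is the torus $T_l^{P}\subset Y_P$ with $P=H_k\cap H_l$. Push $T_{k,l}$ slightly into the interior of $U'$ along the collar. Then $M_{i,j}\cap T_{k,l}$ equals, up to the boundary orientation convention, the intersection inside the $3$-manifold $\bd U'$ of the $1$-manifold $\bd M_{i,j}=F_i\cap F_j$ with the surface $T_{k,l}$. This is the same outward-normal-first argument used in the proof that $G_i\cdot m_j=\delta_{i,j}$. So $[M_{i,j}]\cdot[T_{k,l}]=\pm (F_i\cap F_j)\cdot T^{P}_{l}$ in $\bd U'$, and only the part of $F_i\cap F_j$ lying in (a neighbourhood of) $Y_P$ can contribute.

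Next I analyse $F_i\cap F_j$ near $T_l^P$ using the explicit pieces of the construction.
\begin{itemize}
\item In $Y_P$ for $P\notin\Sing_0(\cA)$, the surface $F_i$ meets $Y_P$ only in $F_{i,2,P}$, and only when $P\in H_i$.
\item When $P\in\Sing_0(\cA)$, both $F_i$ and $F_j$ contain the common piece $F_{\cdot,2,P}$ built from $D_{0,P}$. I make these transverse by choosing different sections $F_{i,1,-}$ and $F_{j,1,-}$ and different disks $D_{0,P}$. The nbc tori never sit at points of $H_0$, so these intersections do not meet any $T_{k,l}$.
\item The pieces $F_{i,3,\cdot}$ lie in the $X$-pieces. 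After choosing the $\Gamma$'s generically, they can be kept away from the tori $T^P_l$.
\end{itemize}
So for an nbc $(k,l)$ with $P=H_k\cap H_l$, the relevant intersection is $F_{i,2,P}\cap F_{j,2,P}$, which is nonempty only if $H_i,H_j\in\cA_P$. By the local sign computation made just before the proposition, $(F_{i,2,P}\cap F_{j,2,P})\cdot T_j^P=+1$ and $(F_{i,2,P}\cap F_{j,2,P})\cdot T_i^P=-1$. This intersection meets no other torus $T_{l'}^P$ with $l'\ne i,j$.

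It remains to evaluate on nbc pairs. If $(i,j)$ is nbc, then $i=\min P$ and $T_{i,j}=T_j^P$, so the contribution is $+1$. If $(k,l)\ne(i,j)$ is nbc at the same point $P$, then $k=\min P=i$ and $l\ne j$; this forces $l\notin\{i,j\}$, so the pairing is $0$. Nbc tori at other points $Q$ pair to $0$, except possibly at $Q=H_i\cap H_j$; but $H_i\cap H_j$ is the single point $P$. This gives the unimodular, diagonal pairing, with the sign fixed by the orientation conventions.

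The main obstacle is the genericity bookkeeping. I must ensure that $F_i$ and $F_j$ can be chosen simultaneously transverse and that no spurious intersection of $F_i\cap F_j$ with $T^P_l$ arises from the $D_{0,P}$ pieces, the $c$-curves, or the $\Gamma$-tubes. I would handle this by isotoping within collars, for example placing the curves $c_{l,i,P}$ and the tubes $F_{i,3,l}$ on the $X_l$ side, away from the specific torus $T^P_l$ used for $T_{k,l}$. I also need to check that the collar push converts the $4$-dimensional intersection sign into the $3$-dimensional one with the same sign for every pair. Since this sign is uniform, it does not affect the basis claim.
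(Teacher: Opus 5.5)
Your skeleton is the paper's own. You pass to the boundary via the collar so that $M_{i,j}\cdot T_{k,l}=(F_i\cap F_j)\cdot T_{k,l}$ in $\bd U'$, note that only $P=H_i\cap H_j$ can contribute, and read off $+1$ on $T_j^P=T_{i,j}$ from the local sign computation.

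The gap is in the one step that makes the pairing matrix the identity. When $m_P\geq 3$, the remaining nbc tori at $P$ are $T_{i,l}=T_l^P$ with $l\neq i,j$, and you simply assert that $F_{i,2,P}\cap F_{j,2,P}$ ``meets no other torus $T_{l'}^P$''. Nothing in your argument supports this. The remedy in your last paragraph, moving the curves $c_{l,i,P}$ and the tubes $F_{i,3,l}$ to the $X_l$ side away from $T_l^P$, cannot work. The surface $F_i$ must pass from $Y_P$ into $X_l$, and its trace $F_i\cap T_l^P$ is homologically forced:
\begin{itemize}
\item $F_i\cdot\mu_{l,P}=F_i\cdot m_l=0$;
\item $\lambda_{l,P}$ is homologous in $Y_P$ to $\sum_{H_h\in\cA_P,\,h\neq l}\mu_{h,P}$, so $F_i\cdot\lambda_{l,P}=1$.
\end{itemize}
Hence both $F_i$ and $F_j$ necessarily cross $T_l^P$, along $c_{l,i,P}$ and $c_{l,j,P}$. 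The set $(F_i\cap F_j)\cap T_l^P=c_{l,i,P}\cap c_{l,j,P}$ is exactly what has to be controlled.

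The missing idea is the one the paper uses. Since $lk(K_{i,P},K_{l,P})=lk(K_{j,P},K_{l,P})=+1$, each of $c_{l,i,P}$ and $c_{l,j,P}$ represents $\mu_{l,P}$ in $H_1(T_l^P)$. Therefore $(F_i\cap F_j)\cdot T_l^P=\pm\,(c_{l,i,P}\cdot c_{l,j,P})_{T_l^P}=\pm\,\mu_{l,P}\cdot\mu_{l,P}=0$. This is an algebraic cancellation and needs no genericity.

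If you prefer your geometric formulation, you must actually arrange it. Isotope $D_{i,P}$ and $D_{j,P}$ so that each meets the solid torus $N_{H_l}\cap\bd B_P$ in meridian disks centred at distinct points of $K_{l,P}$. The two traces are then disjoint parallel meridians.

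With this step supplied, your proof is complete. One further point: the statement asserts that $M_{i,j}$ itself, not $\pm M_{i,j}$, is dual to $T_{i,j}$. You should therefore fix the collar sign using the outward-normal-first convention, as the paper does, rather than leave a global sign ambiguity.
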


\proof
Since $M_{i,j} \cap \bd U' = \bd M_{i,j} = F_i \cap F_j$ and each nbc torus is contained in $\bd U'$, it suffices to examine the intersection of the boundaries. 
In particular, $M_{i,j} \cdot T_{k,l} = \bd M_{i,j} \cdot T_{k,l}$ by the outward normal first convention.
Each nbc torus is contained in a sphere $\bd B_P$ for some $P \in \Sing (\cA) \setminus \Sing_0 (\cA)$. 
Thus, to compute the intersection product, it suffices to consider the intersection around these small spheres.
By step (4) in the construction, if $P \in \Sing (\cA) \setminus \Sing_0 (\cA)$ is such that $\bd B_P$ intersects $F_{i}$, then $P \in \Sing_i (\cA) \setminus \{P_0\}$. Similarly, if $P \in \Sing (\cA) \setminus \Sing_0 (\cA)$ is such that $\bd B_P$ intersects $F_{j}$, then $P \in \Sing_j (\cA) \setminus \{P_0\}$. 
Thus, the only point $P$ satisfying both conditions is $P=H_i \cap H_j$. 
In particular, $[M_{i,j}] \cdot [T_{k,l}] = 0$ if $H_k \cap H_l \neq P$.
By the construction, the intersection of $F_i$ and $F_j$ in $\bd B_P$ is $F_{i,2,P} \cap F_{j,2,P}$. 
From the observation on the orientation above, we have that $(F_{i,2, P} \cap F_{j,2, P}) \cdot T_{i,j} = +1$. 
Let $H_k$ be another line such that $(i,k)$ is an nbc. Then, both intersections $F_{i,2,P} \cap T_{i,k}$ and $F_{j,2,P} \cap T_{i,k}$ are homologous to the meridian of $K_{k}$. 
Thus their algebraic intersection number is zero.
Therefore, $(F_{i,2,P} \cap F_{j,2,P}) \cdot T_{i,k} = (F_{i,2,P} \cap T_{i,k}) \cdot 
(F_{j,2,P} \cap T_{i,k}) = 0$. 
\endproof

\begin{figure}[htbp]
\centering
\begin{tikzpicture}
\coordinate (A) at (0,0);
\draw[thick] (A) ellipse (1.5 and 0.5);
\fill[pattern=north east lines] (A) ellipse (1.5 and 0.5);
\draw[preaction={draw=white,line width=4pt}, thick] (A)++(-1.5,0)ellipse (1 and 1.5); 
\fill[pattern=north west lines] (A)++(-1.5,0) ellipse (1 and 1.5);
\draw[
    preaction={draw=white,line width=3pt}, thick
]
plot[domain=180:255]
({1.5*cos(\x)},{0.5*sin(\x)});

\filldraw (A)++(-0.5,0) circle (0.06);
\filldraw (A)++(-1.5,0) circle (0.06);
\draw[very thick] (A)++(-1.5,0) --++(1,0);
\draw[very thick, ->] (A) ++ (-1.1,0)--++(0.15,0);

\draw (A)++(0,-0.5)--++(0,-0.3) node[below]{$\lambda_{i,P}$};
\draw (A)++(-2.5,-0)--++(-0.3,0) node[left]{$\lambda_{j,P}$};
\draw (A) ++ (0.7,0.2) to[out=90,in=-90] ++(0.3,0.5)node[above]{$F_{i,2,P}$};
\draw (A) ++ (-2,0.7) to[out=180,in=0] ++(-0.5,0.3)node[left]{$F_{j,2,P}$};

\end{tikzpicture}
\caption{The intersection $F_{i,2,P} \cap F_{j,2,P}$}
\label{fig:hopf}
\end{figure}

%\subsection{Computation of the intersection product}
%In this subsection, we compute the intersection product of the basis constructed above and prove that the intersection ring is isomorphic to the Orlik-Solomon algebra.

\subsubsection{Computation of the intersection product}
Under this setting, we can compute the intersection product of the relative cycles constructed above. 

\begin{theorem}\label{thm:ringstr}
The intersection product $\cdot : H_3 (U', \bd U') \times H_3 (U', \bd U') \to H_2 (U', \bd U')$ satisfies
\begin{eqnarray*}
[G_i] \cdot [G_j] = 
\left \{
\begin{array}{ll}
[M_{i,j}] & (\mbox{if $(i,j) \in \mathbf{nbc}(\cA)$}) , \\
{}[M_{k,j}] - [M_{k,i}]   & (\mbox{if there exists $k$ such that $(k,i), (k,j) \in \mathbf{nbc}(\cA)$}) , \\
0 & (\mbox{otherwise}).
\end{array}
\right .
\end{eqnarray*}
\end{theorem}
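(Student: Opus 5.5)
The plan is to determine the class $[G_i]\cdot[G_j]=[M_{i,j}]\in H_2(U',\bd U')$ by pairing it against the nbc tori. Proposition \ref{prop:dual_2nd} says that $\{[M_{k,l}]\}$ is the basis dual to $\{[T_{k,l}]\}$ under the intersection pairing $H_2(U',\bd U')\times H_2(U')\to\bZ$. Hence
\[
[M_{i,j}]=\sum_{(k,l)\in\mathbf{nbc}(\cA)}\bigl([M_{i,j}]\cdot[T_{k,l}]\bigr)[M_{k,l}],
\]
and it suffices to compute the integers $[M_{i,j}]\cdot[T_{k,l}]$ for every nbc $(k,l)$. As in the proof of Proposition \ref{prop:dual_2nd}, the outward normal first convention reduces each of these to a computation on the boundary: $M_{i,j}\cdot T_{k,l}=(F_i\cap F_j)\cdot T_{k,l}$, taken inside $\bd U'$.

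The first step is localization. Each $T_{k,l}$ lies in $\bd B_P$ with $P=H_k\cap H_l\notin\Sing_0(\cA)$. By steps (4) and (5) of the construction, inside such a sphere $F_i$ meets only the $Y_P$ with $P\in\Sing_i(\cA)\setminus\{P_0\}$. The pieces $F_{i,3,\cdot}$ live in the $X$'s and do not meet nbc tori. So only $P=H_i\cap H_j$ can contribute. If $P\in\Sing_0(\cA)$, i.e. $P$ lies on $H_0$, no nbc torus is involved and the product is $0$; this is the ``otherwise'' case. Otherwise, let $k=\min P\ge 1$. There are three subcases:
\begin{enumerate}[(a)]
\item $k=i$, so that $(i,j)$ is an nbc;
\item $k<i<j$, so that $(k,i)$ and $(k,j)$ are nbcs;
\item the remaining ordering, which reduces to the previous ones by antisymmetry $M_{i,j}=-M_{j,i}$.
\end{enumerate}

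In $\bd B_P$ the intersection is $F_{i,2,P}\cap F_{j,2,P}=D_{i,P}\cap D_{j,P}$, restricted to $Y_P$. It must be paired with each torus $T_{k,l}=T^P_l$ for $H_l\in\cA_P$, $l\neq k$. I will use the sign computation preceding Proposition \ref{prop:dual_2nd}, which gives $(D_{i,P}\cap D_{j,P})\cdot T^P_j=+1$ and $(D_{i,P}\cap D_{j,P})\cdot T^P_i=-1$. For any third line $H_l\in\cA_P$ with $l\neq i,j$, I will use the argument already given: both $F_{i,2,P}\cap T^P_l$ and $F_{j,2,P}\cap T^P_l$ are homologous to $\mu_{l,P}$, so their intersection on the torus is $0$.

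The case analysis then reads as follows. In case (a), the only nonzero pairing is with $T_{i,j}=T^P_j$, with value $+1$, giving $[M_{i,j}]$. In case (b), $T_{k,j}=T^P_j$ contributes $+1$ and $T_{k,i}=T^P_i$ contributes $-1$, giving $[M_{k,j}]-[M_{k,i}]$. All other tori contribute $0$.

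The main obstacle I expect is making the localization and signs rigorous. Two points need care. First, one must check that $F_{i,2,P}\cap F_{j,2,P}$ really is the transverse intersection $D_{i,P}\cap D_{j,P}$ after the perturbation. Second, the orientation of this intersection arc, as the boundary of $M_{i,j}$, must agree with the local transverse-intersection convention used to get the signs $\pm1$; the boundary orientation of $G_i$ versus $F_i$ could introduce a sign. I would settle both using the local model in coordinates $(z_i,z_j)$ on $B_P$, in which $D_{i,P}$ and $D_{j,P}$ can be taken to be the standard Seifert disks of a Hopf link. The remaining components of $\cA_P$ only add meridional punctures, and these do not change the torus pairings. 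Finally, the algebra structure follows from the fact that these relations coincide with the Orlik-Solomon relations.
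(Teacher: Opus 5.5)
Your proposal is correct and follows essentially the same route as the paper. You expand $[G_i]\cdot[G_j]=[M_{i,j}]$ in the basis dual to the nbc tori (Proposition \ref{prop:dual_2nd}), localize to $Y_P$ with $P=H_i\cap H_j$, and read off the coefficients from the local signs ($+1$ on $T^P_j$, $-1$ on $T^P_i$, $0$ on third lines via meridian--meridian pairings), with $P\in H_0$ giving $0$ as in Proposition \ref{prop:prod_infty}. The only cosmetic difference is that the paper takes the nbc case as immediate from the definition $M_{i,j}=G_i\cap G_j$, while you re-derive it from the same pairing computation, which is Proposition \ref{prop:dual_2nd} again.
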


We prove this theorem by considering the three cases separately.
\begin{proposition}\label{prop:prod_nonnbc}
If there exists $k$ such that $(k,i), (k,j) \in \mathbf{nbc}(\cA)$, then $[G_i] \cdot [G_j] = [M_{k,j}] - [M_{k,i}] $ in $H_2 (U' , \bd U')$.
\end{proposition}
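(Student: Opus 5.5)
Since $\{[M_{a,b}] \mid (a,b)\in \mathbf{nbc}(\cA)\}$ is a basis of $H_2(U',\bd U')$ dual to the nbc tori (Proposition \ref{prop:dual_2nd}), and intersection pairing with $H_2(U')$ is perfect, I will determine $[G_i]\cdot[G_j]$ by computing the numbers $([G_i]\cdot[G_j])\cdot[T_{a,b}]$ for every nbc $(a,b)$. The claim is then equivalent to
\[
([G_i]\cdot[G_j])\cdot[T_{a,b}] =
\begin{cases}
+1 & (a,b)=(k,j),\\
-1 & (a,b)=(k,i),\\
0 & \text{otherwise.}
\end{cases}
\]
As in the proof of Proposition \ref{prop:dual_2nd}, the relative cycle $G_i\cap G_j$ meets $\bd U'$ in $F_i\cap F_j$, and the tori lie in $\bd U'$. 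By the outward normal first convention, the number therefore reduces to $(F_i\cap F_j)\cdot T_{a,b}$, computed inside the $3$-manifold $\bd U'$.

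\textbf{Localization.} In the present situation $i,j$ are not an nbc but $(k,i),(k,j)$ are, so $P=H_i\cap H_j=H_k\cap H_i\cap H_j$ with $\min P=k<i<j$ and $P\notin \Sing_0(\cA)$. The nbc tori meeting $F_i$ lie in spheres $\bd B_Q$ with $Q\in\Sing_i(\cA)\setminus\{P_0\}$; there $F_i$ is the piece $F_{i,2,Q}$ from step (4). Pieces from steps (5), (6) and (3) lie in $X$-pieces or in spheres around points of $H_0$, which contain no nbc tori. The same holds for $F_j$. Only $Q=P$ contributes, and the relevant tori are $T_{k,l}$ for $H_l\in\cA_P\setminus\{H_k\}$.

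\textbf{Local computation in $\bd B_P$.} For each such $l$, I will compute $(F_{i,2,P}\cap F_{j,2,P})\cdot T_{k,l}$ as the intersection number, on the torus $T_l^P$, of the curves $F_{i,2,P}\cap T_l^P$ and $F_{j,2,P}\cap T_l^P$.
\begin{itemize}
\item On $T_i^P$: $F_{i,2,P}$ gives $\lambda_{i,P}$ and $F_{j,2,P}$ gives $c_{i,j,P}\sim\mu_{i,P}$.
\item On $T_j^P$: $F_{i,2,P}$ gives $c_{j,i,P}\sim\mu_{j,P}$ and $F_{j,2,P}$ gives $\lambda_{j,P}$.
\item On any other $T_l^P$: both curves are homologous to $\mu_{l,P}$, so the intersection number is $0$.
\end{itemize}
With the orientation conventions fixed just before Proposition \ref{prop:dual_2nd}, the pairing $\lambda\cdot\mu$ versus $\mu\cdot\lambda$ gives opposite signs on $T_j^P$ and $T_i^P$. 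This is exactly the computation done there, where $(D_{i,P}\cap D_{j,P})\cdot T_j^P=+1$ and $(D_{i,P}\cap D_{j,P})\cdot T_i^P=-1$. Hence pairing with $T_{k,j}$ gives $+1$, pairing with $T_{k,i}$ gives $-1$, and pairing with the other tori gives $0$, which is the claimed value.

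\textbf{Main obstacle.} The delicate point is that $F_{i,2,P}$ and $F_{j,2,P}$ are no longer Seifert disks of a Hopf pair. Since $H_k$ also passes through $P$, the disks $D_{i,P}$ and $D_{j,P}$ each meet the solid torus around $K_{k,P}$. One must check two things:
\begin{itemize}
\item no extra contribution arises on $T_k^P$, which is not an nbc torus anyway;
\item the local intersection count on a torus depends only on the homology classes of the boundary curves, and the component count of $c_{\cdot,\cdot,P}$ does not matter.
\end{itemize}
I would handle this by replacing the $3$-dimensional count with the torus-level count via the transversality convention, using $[c_{l,i,P}]=\mu_{l,P}$ (from $lk=+1$) and the sign conventions $(\lambda,\mu)$ positive. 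Care with the orientation of $M_{i,j}=-M_{j,i}$ and of the nbc torus $T_{k,l}$ (oriented via $H_l$) is the main bookkeeping risk.
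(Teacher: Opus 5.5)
Your proposal is correct and follows the paper's proof: you determine $[G_i]\cdot[G_j]$ by pairing with the nbc tori, localize to $\bd B_P$ for $P=H_k\cap H_i\cap H_j$, and read off $+1$ on $T_{k,j}=T_j^P$, $-1$ on $T_{k,i}=T_i^P$ and $0$ on the remaining $T_{k,l}$ from the torus-level classes of the curves. Your extra remarks are the details the paper compresses into ``by the observation on the orientation before'': the count depends only on the classes $\lambda$ and $\mu$, so the third line $H_k$ through $P$ and possibly disconnected curves $c_{\cdot,\cdot,P}$ do not matter, and $T_k^P$ is not an nbc torus.
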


\proof
By the observation on the orientation before, we have 
$([G_{i}] \cdot [G_{j}]) \cdot [T_{k,j}] = [M_{i,j}] \cdot [T_{j}^P] = (F_{i,2,P} \cap F_{j,2,P}) \cdot [T_{j}^{P}] = +1$. Similarly, $([G_{i}] \cdot [G_{j}]) \cdot [T_{k,i}] = -1$.
By a similar argument in the proof of Proposition \ref{prop:dual_2nd}, it follows that the intersection product of $[G_i] \cdot [G_j]$ with other nbc tori vanishes. 
Therefore, by the duality, we have $[G_i] \cdot [G_j] = [M_{k,j}]- [M_{k,i}]$.
\endproof

\begin{proposition}\label{prop:prod_infty}
Suppose that $H_{i} \cap H_{j} \in H_{0}$. Then, $[G_i] \cdot [G_j] = 0$.
\end{proposition}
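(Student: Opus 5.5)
The approach is the same duality argument used for Proposition \ref{prop:prod_nonnbc}. Since the classes $[M_{k,l}]$ for $(k,l) \in \mathbf{nbc}(\cA)$ form a basis of $H_2(U',\bd U')$ dual to the nbc tori (Proposition \ref{prop:dual_2nd}), it suffices to show that
\[
([G_i]\cdot[G_j])\cdot[T_{k,l}] = 0
\]
for every nbc $(k,l)$. By the outward normal first convention, and because each $T_{k,l}$ lies in $\bd U'$, this number equals $(F_i \cap F_j)\cdot T_{k,l}$, computed inside $\bd U'$. Each nbc torus lies in $\bd B_Q$ for some $Q \in \Sing(\cA)\setminus\Sing_0(\cA)$, so the computation localizes to these spheres.

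Next I locate where $F_i$ and $F_j$ meet the spheres $\bd B_Q$ with $Q \notin H_0$. From steps (4) and (5) of the construction, $F_i$ meets such a sphere only at points $Q \in \Sing_i(\cA)\setminus\{P_0^{(i)}\}$, where $P_0^{(i)} = H_i\cap H_0$. The pieces $F_{i,2,P}$ for $P\in\Sing_0(\cA)$, together with $F_{i,2,P_0}$, lie in spheres around points of $H_0$ and are therefore irrelevant. The tubes $F_{i,3,l}$ lie in the pieces $X_l$ and can be isotoped off the nbc tori. The same holds for $F_j$ with $Q\in\Sing_j(\cA)\setminus\{P_0^{(j)}\}$. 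The only common singular point of $H_i$ and $H_j$ is $H_i\cap H_j$, and by hypothesis this point equals $P_0^{(i)}=P_0^{(j)}\in H_0$, which is excluded. So no $\bd B_Q$ with $Q\notin H_0$ meets both $F_i$ and $F_j$.

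It follows that $F_i\cap F_j$ can be made disjoint from every nbc torus, so all the pairings vanish and $[G_i]\cdot[G_j]=0$ by duality.

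The main obstacle is controlling the parts of $F_i\cap F_j$ lying in the pieces $X_l$. There the tubes $F_{i,3,l}=\pi_l^{-1}(\Gamma_{i,l})$ and $F_{j,3,l}$ may intersect each other, and the sections $F_{i,1,-}$ and $F_{j,1,-}$ inside $X_0$ also meet. I would handle this by noting that the nbc tori lie in the $Y_Q$ pieces, not in any $X_l$. Hence after a small isotopy of the $1$-manifold $F_i\cap F_j$ inside $\bd U'$, keeping the $X_l$ part away from the gluing tori $T_l^Q$, its intersection with each $T_{k,l}$ is empty. If such an isotopy is not available directly, I would instead compute the intersection within the torus $T_l^Q$: both surfaces restrict there to multiples of the same meridian class $\mu_{l,Q}$, so their algebraic intersection is zero, exactly as in the last step of the proof of Proposition \ref{prop:dual_2nd}.
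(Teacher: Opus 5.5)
Your argument is the paper's argument. You reduce by duality to pairing $F_i\cap F_j$ with the nbc tori, which lie in spheres $\bd B_Q$ with $Q\notin H_0$, and you note that $F_i$ and $F_j$ can only meet a common such sphere at $H_i\cap H_j\in H_0$.

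One small correction: the tubes $F_{i,3,l}$ cannot be isotoped off the nbc tori, because their boundary curves $c_{l,i,Q}$ lie on $T_l^Q$ with $Q=H_i\cap H_l$. This is harmless, since $F_i$ meets an nbc torus $T_l^Q$ only when $Q\in\Sing_i(\cA)$. So the disjointness you need holds automatically, and the isotopy and fallback in your last paragraph are unnecessary.
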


\proof
By the duality, it suffices to show that $([G_i] \cdot [G_j]) \cdot T_{k,l} = 0$ for each nbc $(k,l) \in \mathbf{nbc}(\cA)$.
Since $H_{i} \cap H_{j} \in H_{0}$, $\Sing_{i} (\cA) \cap \Sing_{j} (\cA) = \{H_{i} \cap H_{j}\}$.
In particular, $(\Sing_i (\cA) \setminus \{P_0\}) \cap (\Sing_j (\cA) \setminus \{P_0\}) = \emptyset $.
Thus, the intersection does not appear in the pieces in the step $(4)$. 
Each nbc torus is contained in the sphere $\bd B_P$ for some $P \in \Sing (\cA) \setminus \Sing_0 (\cA)$. 
Thus, the intersection $G_i \cap G_j$ does not intersect with any nbc torus. 
\endproof

\textit{Proof of Theorem \ref{thm:ringstr}}.
If $(i,j) \in \mathbf{nbc}(\cA)$, then it is clear that $[G_i] \cdot [G_j] 
= [M_{i,j}]$ by definition. Therefore, by Proposition \ref{prop:prod_nonnbc} and \ref{prop:prod_infty}, the theorem follows.

\endproof

\textit{Proof of Theorem \ref{thm:main_cohom}}. 
By the Poincar\'e-Lefschetz duality, the cohomology ring is isomorphic to the intersection ring $H_{4-*} (U', \bd U')$. 
By Theorem \ref{thm:ringstr}, the map $\Phi: H_{4-*} (U', \bd U') \to A^{*}(\cA)$ defined by $[G_{i}] \mapsto e_{i}, [M_{j,k}] \mapsto f_{j,k}$ is an isomorphism of algebras.
\endproof

\subsection{A proof by direct computations}
In this subsection, we give a simpler proof for the Orlik-Solomon description of the cohomology ring by a direct computation.

As we saw in Section \ref{sec:hom}, the homology group of the complement is a free module. Thus, by the universal coefficient theorem, the cohomology group satisfies 
\[
H^{k} (U;\bZ) \cong \Hom (H_k (U;\bZ), \bZ)
\]
for each $k \geq 0$. Let $\{\alpha_{i}\}_{i=1,\dots, n} \subset H^{1} (U; \bZ)$ be the dual basis of meridian cycles and $\{\beta_{r,s}\}_{(r,s) \in \mathbf{nbc}(\cA) } \subset H^2 (U;\bZ)$ be the dual basis of nbc tori.
To compute the cup product $\alpha_i \cup \alpha_j$, it suffices to compute the pairing with each nbc torus for each $i,j$. 

Let us fix an nbc $(r,s)$.
As we saw before, we can take the meridian $\mu_{r,s}$ and longitude $\lambda_{r,s}$ such that $[\mu_{r,s}] = m_s$ and $[\lambda_{r,s}] = \sum_{j \neq s, H_j \in \cA_{P}}lk(K_j,K_s)m_{j} =  \sum_{j \neq s, H_{j \in \cA_P }}m_{j} $, where $P = H_r \cap H_s$.
Let $\mu_{r,s}^{*}$ and $\lambda_{r,s}^{*}$ be their duals in $H^{1} (T_{r,s}; \bZ)$.
Now, the torus $T_{r,s}$ is oriented so that $(\lambda_{r,s}, \mu_{r,s})$ is a positive orientation. Thus, we have $\langle \lambda_{r,s}^{*} \cup \mu_{r,s}^{*}, [T_{r,s}]\rangle = 1$. 

To compute the cup product, we set $\alpha_{i} 
= a_{i,\lambda_{r,s}} \lambda_{r,s}^{*} + a_{i,\mu_{r,s}} \mu_{r,s}^{*}$.

\begin{lemma}\label{lem:cup}
We have that 
\[
\langle \alpha_{i} \cup \alpha_{j}, [T_{r,s}] \rangle
= a_{i,\lambda_{r,s}} \delta_{j,s} - a_{j, \lambda_{r,s}} \delta_{i,s}.
\]
\end{lemma}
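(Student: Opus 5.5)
The plan is to work entirely on the torus $T_{r,s}$ by naturality of the cup product. Let $\iota: T_{r,s} \to U$ be the inclusion. Then
\[
\langle \alpha_i \cup \alpha_j, [T_{r,s}] \rangle = \langle \iota^*\alpha_i \cup \iota^*\alpha_j, [T_{r,s}] \rangle .
\]
In the paper's notation $\alpha_i$ already stands for $\iota^*\alpha_i \in H^1(T_{r,s})$, written as $a_{i,\lambda_{r,s}} \lambda_{r,s}^{*} + a_{i,\mu_{r,s}} \mu_{r,s}^{*}$.

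First I determine the coefficients. By duality on the torus, $a_{i,\mu_{r,s}} = \langle \alpha_i, \iota_*\mu_{r,s}\rangle$ and $a_{i,\lambda_{r,s}} = \langle \alpha_i, \iota_*\lambda_{r,s}\rangle$. Since $[\mu_{r,s}] = m_s$ in $H_1(U)$ and $\{\alpha_i\}$ is dual to the meridian basis $\{m_i\}$ of Theorem \ref{thm:homol_1st}, we get $a_{i,\mu_{r,s}} = \delta_{i,s}$. Here $s \geq 1$ because index $0$ never appears in an nbc, so $m_s$ is a genuine basis element. The $\lambda$-coefficient can stay unevaluated, because the statement keeps $a_{i,\lambda_{r,s}}$ symbolic. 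If desired, it equals $1$ when $i \neq s$ and $H_i \in \cA_P$, and $0$ otherwise. This uses $[\lambda_{r,s}] = \sum_{j \neq s,\, H_j \in \cA_P} m_j$ together with $0 \notin \cA_P$, as $P \notin H_0$ for an nbc.

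Next I expand bilinearly. Graded commutativity gives $\lambda^*\cup\lambda^* = \mu^*\cup\mu^* = 0$ in degree one, and $\mu^* \cup \lambda^* = -\lambda^*\cup\mu^*$. Hence
\[
\alpha_i\cup\alpha_j = \bigl(a_{i,\lambda_{r,s}} a_{j,\mu_{r,s}} - a_{i,\mu_{r,s}} a_{j,\lambda_{r,s}}\bigr)\, \lambda_{r,s}^*\cup\mu_{r,s}^* .
\]
Pairing with $[T_{r,s}]$ and using $\langle \lambda_{r,s}^{*} \cup \mu_{r,s}^{*}, [T_{r,s}]\rangle = 1$ (from the chosen orientation in which $(\lambda_{r,s},\mu_{r,s})$ is positive) gives $a_{i,\lambda_{r,s}}\delta_{j,s} - a_{j,\lambda_{r,s}}\delta_{i,s}$, which is the claim.

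The main obstacle is the sign normalization $\langle \lambda^*\cup\mu^*, [T]\rangle = +1$. For the standard torus $S^1\times S^1$ with product orientation, $\langle a^*\cup b^*, [T]\rangle = 1$ holds when $(a,b)$ is the positively oriented basis. Here $(a,b) = (\lambda,\mu)$, so the identity follows, but this convention must be checked against the boundary orientation of $T_{r,s} = \bd S_{r,s}$. The other point to verify is that $\lambda_{r,s}$ and $\mu_{r,s}$ really form a basis of $H_1(T_{r,s})$ with the stated images in $H_1(U)$. This comes from the earlier linking-number computation, namely $lk = +1$ by positivity.
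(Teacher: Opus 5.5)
Your proposal is correct and is essentially the paper's argument: restrict to the torus, expand in the dual basis $\lambda_{r,s}^*,\mu_{r,s}^*$, use $\langle \lambda_{r,s}^*\cup\mu_{r,s}^*,[T_{r,s}]\rangle = 1$, and substitute $a_{i,\mu_{r,s}}=\delta_{i,s}$, which comes from $[\mu_{r,s}]=m_s$. The orientation check you flag is not a real obstacle for this lemma, because the paper fixes the orientation of $T_{r,s}$ by convention so that $(\lambda_{r,s},\mu_{r,s})$ is positive.
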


\proof
By a direct computation, we have:
\begin{eqnarray*}
\langle \alpha_i \cup \alpha_{j},  [T_{r,s}] \rangle 
&=&\langle (a_{i,\lambda_{r,s}} \lambda_{r,s}^{*} + a_{i,\mu_{r,s}} \mu_{r,s}^{*})  \cup (a_{j,\lambda_{r,s}} \lambda_{r,s}^{*} + a_{j,\mu_{r,s}} \mu_{r,s}^{*}) , [T_{r,s}] \rangle \\
&= &(a_{i,\lambda_{r,s}} a_{j,\mu_{r,s}} - a_{j,\lambda_{r,s}} a_{i,\mu_{r,s}}) 
\langle \lambda_{r,s}^{*} \cup \mu_{r,s}^{*}, [T_{r,s}] \rangle \\
&=&a_{i,\lambda_{r,s}} a_{j,\mu_{r,s}} - a_{j,\lambda_{r,s}} a_{i,\mu_{r,s}}.
\end{eqnarray*}
The Lemma follows from $a_{j,\mu_{r,s}} = \delta_{j,s}$ by the definition of the meridian. 
\endproof

Also, for the coefficient $a_{i, \lambda_{r,s}}$, we have 
\begin{eqnarray*}
a_{i, \lambda_{r,s}} = 
\left \{
\begin{array}{ll}
1 & (\mbox{$i \neq s$ and $H_r \cap H_s \in H_i$}), \\
0 & (\mbox{otherwise}).
\end{array}
\right .
\end{eqnarray*}

\textit{Proof of Theorem \ref{thm:main_cohom}}.
It suffices to show that the cup products satisfy 
\begin{equation*}
\alpha_i \cup \alpha_j=
\left \{ 
\begin{array}{ll}
\beta_{i,j} & (\mbox{if $(i,j) \in \mathbf{nbc}(\cA)$}) , \\
\beta_{k,j} - \beta_{k,i} & (\mbox{if there exists $k$ such that $(k,i), (k,j) \in \mathbf{nbc}(\cA)$}) , \\
0, & (\mbox{otherwise}),
\end{array}
\right .
\end{equation*}
for $1 \leq i < j \leq n$.

First, let $(i,j)$ be an nbc. Let $P=H_i \cap H_j$.
By Lemma \ref{lem:cup}, for $s \neq i,j$, 
$\langle \alpha_i \cup \alpha_j, [T_{r,s}] \rangle = 0$.
When $s=j$, 
$\langle \alpha_i \cup \alpha_j, [T_{r,s}] \rangle = a_{i,\lambda_{r,j}}$. If $a_{i,\lambda_{r,j}} \neq 0$, $H_{r} \cap H_j$ is contained in $H_i$. Thus, $P=H_r \cap H_j$. By the minimality of the index, we have $r=i$. 
When $s=i$, we have $ \langle \alpha_i \cup \alpha_j, [T_{r,s}] \rangle = -a_{j,\lambda_{r,i}}$. 
If $a_{j, \lambda_{r,i}} \neq 0$, then, $H_r \cap H_i$ is contained in $H_j$ and $P=H_r \cap H_i$. By the definition of an nbc, $r$ is the minimum index of this intersection. However, since $(i,j)$ is an nbc, this is a contradiction. 
Therefore, we have 
\begin{eqnarray*}
\langle \alpha_i \cup \alpha_j , [T_{r,s}] \rangle = 
\left \{
\begin{array}{ll}
1 & ((r,s) = (i,j)) ,\\
0 & (\mbox{otherwise}).
\end{array}
\right .
\end{eqnarray*}
In particular, $\alpha_{i} \cup \alpha_{j} = \beta_{i,j}$.

Next, suppose that there exists $k$ such that $(k,i)$ and $(k,j)$ are nbcs. Let $P=H_k \cap H_i \cap H_j$. 
Again, by Lemma \ref{lem:cup}, for $s \neq i,j$, 
$\langle \alpha_i \cup \alpha_j, [T_{r,s}] \rangle = 0$. 
When $s = j$, 
$\langle \alpha_i \cup \alpha_j, [T_{r,s}] \rangle = a_{i,\lambda_{r,j}}$. This is non-zero if $H_r \cap H_j$ is contained in $H_i$. In this case, $H_r \cap H_j = P = H_i \cap H_j$. By the minimality of the index, we have $r = k$. 
When $s=i$, by the same reason, we have $\langle \alpha_i \cup \alpha_j, [T_{r,s}] \rangle = -a_{j,\lambda_{r,i}}$ is non zero if and only if $k=r$. Therefore,  $\alpha_{i} \cup \alpha_{j} = \beta_{k,j} - \beta_{k,i}$. 

Let us consider the final case. In this case, $P =H_i \cap H_j$ is contained in $H_0$. By Lemma \ref{lem:cup}, for $s \neq i,j$, 
$\langle \alpha_i \cup \alpha_j, [T_{r,s}] \rangle = 0$. 
When $s=j$, 
$\langle \alpha_i \cup \alpha_j, [T_{r,s}] \rangle = a_{i,\lambda_{r,j}}$. If this does not vanish, $H_r \cap H_j$ is contained in $H_i$. 
Thus, $H_r \cap H_j = H_{i} \cap H_j \in H_0$. This contradicts that $(r,j)$ is nbc. When $s=i$, the pairing vanishes for the same reason. This completes the proof. 
\qed

\section{Homotopy types and minimality}\label{sec:min}

\subsection{Minimal CW complexes}

\begin{definition}
A finite CW complex $X$ is a \textit{minimal CW complex} if the $k$-th Betti number equals the number of $k$-cells in $X$ for each $k \geq 0$. 
A topological space $X$ is said to be \textit{minimal} if $X$ is homotopy equivalent to a minimal CW complex.
\end{definition}

\begin{remark}
Let $c_{k} (X)$ be the number of $k$-cells. 
Then, by the cellular chain complex, we always have that $b_k (X) \leq c_{k} (X)$. The minimality says that the equality holds for each $k$.
\end{remark}

Recall that the \textit{rank} $d(G)$ of a group $G$ is the minimum number of generators.
The following proposition implies that the minimality restricts the fundamental group.

\begin{proposition}\label{prop:min_fund}
Let $X$ be a connected minimal CW complex and $n=b_1 (X)$. Then, $d(\pi_1 (X)) = n$.
Moreover, if $b_1(X)=1$, then $\pi_1 (X) \cong \bZ$.
\end{proposition}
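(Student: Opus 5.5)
Since the statement concerns only the fundamental group, I will pass to the $2$-skeleton of the minimal complex and read off a presentation of $\pi_1(X)$.

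\textbf{Step 1: reduce to one $0$-cell.} Let $X$ be a connected minimal CW complex. Since $b_0(X)=1$, $X$ has exactly one $0$-cell. Since $b_1(X)=n$, it has exactly $n$ one-cells. The $1$-skeleton is therefore a wedge of $n$ circles.

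\textbf{Step 2: write down a presentation.} By cellular approximation, $\pi_1(X)=\pi_1(X^{(2)})$. Hence $\pi_1(X)$ has a presentation with $n$ generators $x_1,\dots,x_n$, one for each $1$-cell. It has one relator $r_\sigma$ for each $2$-cell $\sigma$, namely the attaching word of $\sigma$. In particular $d(\pi_1(X))\le n$.

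\textbf{Step 3: the lower bound $d(\pi_1(X))\ge n$.} The abelianization of $\pi_1(X)$ is $H_1(X;\bZ)$, which has rank $b_1(X)=n$. Any generating set of a group maps onto a generating set of its abelianization. A group whose abelianization has rank $n$ therefore needs at least $n$ generators. Combined with Step 2, this gives $d(\pi_1(X))=n$.

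\textbf{Step 4: the relators are trivial in the abelianization.} This is the step that uses minimality on the $2$-cells, and I expect it to be the main (mild) obstacle. The cellular boundary map $\partial_2\colon C_2\to C_1=\bZ^n$ sends a $2$-cell $\sigma$ to the abelianization (exponent sums) of its attaching word $r_\sigma$. Since $C_1=\bZ^n$ and $b_1=n$, the group $H_1=\ker\partial_1/\operatorname{im}\partial_2$ has rank $n$. Here $\partial_1=0$ because there is only one $0$-cell, so $H_1=\bZ^n/\operatorname{im}\partial_2$, and rank $n$ forces $\operatorname{im}\partial_2$ to have rank $0$. As $C_1$ is free, this means $\operatorname{im}\partial_2=0$. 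Consequently every relator $r_\sigma$ has exponent sum zero in each generator, i.e.\ $r_\sigma$ lies in the commutator subgroup $[F_n,F_n]$ of the free group.

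Note that we do not even need the equality $c_2=b_2$ here. Minimality in degrees $0$ and $1$ alone already gives $\partial_2=0$ over $\bZ$, because $C_1$ is free of rank $b_1$.

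\textbf{Step 5: the case $b_1(X)=1$.} Now $F_1=\bZ$ is abelian, so $[F_1,F_1]$ is trivial. By Step 4 every relator $r_\sigma$ is trivial in $F_1$, so $\pi_1(X)=\langle x_1\mid r_\sigma\rangle\cong\bZ$.

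Beyond Step 4, the care needed is only in confirming that the cellular boundary of a $2$-cell is indeed the abelianized attaching word, which is the standard description of $\partial_2$.
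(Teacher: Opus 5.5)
Your proof is correct, and Steps 1--3 are essentially the paper's argument: the $2$-skeleton gives a presentation with $c_1(X)=n$ generators, and the abelianization $H_1(X)$, of rank $n$, forces at least $n$ generators. For $b_1(X)=1$ the paper is shorter: $d(\pi_1(X))=1$ makes $\pi_1(X)$ cyclic, and a cyclic group whose abelianization has rank one is $\bZ$. Your Step 4 detour through $\partial_2=0$ is valid, and it shows that for any $n$ the relators lie in $[F_n,F_n]$ using only $c_0=b_0$ and $c_1=b_1$, but it is not needed for this statement.
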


\proof
Recall that the $2$-skeleton of a CW complex $X$ gives a presentation of the fundamental group $\pi_1 (X)$ with $c_1 (X)$ generators and $c_2 (X)$ relations (\cite{hat}, Proposition 1.26). Thus, $d(\pi_1(X)) \leq c_1 (X)$.
Moreover, since $d(H_1(X)) =d(\pi_1(X)/ [\pi_1(X), \pi_1 (X)]) \geq n$, $d(\pi_1 (X)) \geq n$. Since $X$ is minimal, $c_1 (X)=n$. We have that $d(\pi_1 (X)) = n$.

When $n=1$, since $d(\pi_1 (X))=1$, $\pi_1 (X)$ is a cyclic group.
Since its abelianization has rank one, $\pi_1 (X) \cong \bZ$.
\endproof 

For example, a knot complement in the $3$-sphere is minimal if and only if the knot is a trivial knot, due to the unknotting theorem \cite[Theorem 4.B.1]{rol}.

\subsection{Symplectic line arrangements are minimal.}

In this subsection, we prove Theorem \ref{thm:main_symp}, which states that the complement of a symplectic line arrangement is minimal.
To prove this, we use Libgober's theorem describing the homotopy type of the complement of algebraic and $J$-holomorphic curves. 
\begin{proposition}\label{prop:lib-hom}
(cf. \cite{lib-hom}). 
Let $C \cup L$ be a $J$-holomorphic symplectic curve in $\bC P^2$, where $L$ is a $J$-holomorphic line. Then, the complement $\bC P^2 \setminus (C \cup L)$ is homotopy equivalent to the $2$-dimensional CW complex associated to the presentation of the fundamental group given by the braid monodromy.
\end{proposition}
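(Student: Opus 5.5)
The plan is to adapt Libgober's argument for algebraic curves to the $J$-holomorphic setting, replacing the algebraic projection from a point by a symplectic pencil.

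\emph{Step 1: the fibration.} Choose a generic point $p \in L$ not on $C$. Consider the pencil of $J$-holomorphic lines through $p$; by Gromov's theory, for a tame almost complex structure $J$ on $\bC P^2$ there is exactly one $J$-line through any two points. This pencil gives a projection $\bC P^2 \setminus \{p\} \to \bC P^1$, with $L$ as one fiber. Restricting to $\bC^2 := \bC P^2 \setminus L$, we get a map $\bC^2 \to \bC$ whose fibers are $J$-lines meeting $C$.

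By positivity of intersections, after a small generic perturbation of the choice of $p$ (and, if needed, an isotopy of $C$ through $J$-curves), each fiber meets $C$ positively in $d=\deg C$ points counted with multiplicity. There are finitely many special fibers, over points $y_1,\dots,y_s$, where $C$ is tangent to a fiber or has a singular point. Thus $C \cap \bC^2 \to \bC$ is a braided branched cover, and over $\bC \setminus \{y_1,\dots,y_s\}$ the complement $\bC^2 \setminus C$ is a locally trivial fibration with fiber $\bC$ minus $d$ points. This is exactly the framework of Moishezon and Kharlamov--Kulikov for braid monodromy of symplectic curves.

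\emph{Step 2: the cell complex.} I will follow Libgober's construction. Choose a base point and a system of disjoint paths from the base point to the $y_k$, and let $D$ be a regular neighborhood of the resulting star. Then $\bC \setminus D$ deformation retracts to its boundary, and the total space over $\bC$ retracts onto the preimage of $D$.

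Over the star, the space is built from two pieces:
\begin{itemize}
\item the generic fiber, a wedge of $d$ circles, which contributes the generators;
\item a local model over each $y_k$, which contributes relations $\beta_k(g_i) = g_i$ coming from the local braid monodromy $\beta_k$.
\end{itemize}

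The key local claim is that, near each singular fiber, the pair (neighborhood, generic fiber) has the homotopy type of the generic fiber with 2-cells attached, one for each free-group relation $\beta_k(x_i)x_i^{-1}$, with all but a redundant set contractible away. Libgober proves this using a Zariski--van Kampen type analysis and the conic structure of the local germ.

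\emph{Step 3: the local models, which I expect to be the main obstacle.} In the symplectic case the local models are only topologically standard after isotopy. Near a singular point, a $J$-holomorphic curve is $C^1$-close, via Micallef--White, to a complex-analytic germ, and the fibration can be locally isotoped to a holomorphic projection. Tangency points are handled similarly.

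I would first arrange $J$ to be integrable near the singular points and tangencies, keeping $C$ $J$-holomorphic. This is possible for line arrangements by McDuff's local normalization, or by the isotopy to an almost-complex structure that is standard near the intersections. Libgober's local analysis then applies verbatim. Gluing the local pieces via the same Mayer--Vietoris and van Kampen style homotopy-pushout argument yields a 2-complex whose 1-cells are the $d$ generators and whose 2-cells are the braid-monodromy relations. This complex is homotopy equivalent to $\bC P^2 \setminus (C \cup L)$.
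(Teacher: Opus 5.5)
Your proposal follows essentially the same route as the paper. Both transplant Libgober's proof by observing that the pencil of $J$-lines through a point of $L$ is a locally trivial fibration away from finitely many special fibers, and that the pieces around those fibers are the standard link-complement models. The paper simply asserts that both ingredients carry over, whereas you secure the local models explicitly by normalizing $J$ and the pencil near the singular points. One point to tighten: over each special fiber, attach 2-cells only for the non-redundant relations, $d-e_k$ of them, where $e_k$ is the number of points of $C$ in that fiber. Cells for trivial or redundant relations $\beta_k(x_i)x_i^{-1}$ would add $S^2$ wedge summands and break the homotopy equivalence.
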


Libgober proves this statement for algebraic curves. His argument, however, uses only the local triviality of the fibration induced by the pencil away from its singular fibers and the standard two-dimensional CW model for a link complement associated with a Wirtinger/Artin presentation. Both ingredients remain valid for $J$-holomorphic symplectic curves. Hence the same proof applies without change.

\vspace{3mm}

\textit{Proof of Theorem \ref{thm:main_symp}}.
We apply the braid monodromy technique for symplectic curves in $\bC P^2$ (see \cite[Section 4]{kha-kul} or \cite[Section 3.1]{awa-gol} for details).
Let $\omega_{FS}$ be the Fubini-Study symplectic form and $J$ be a compatible almost complex structure such that $V(\cA)$ is $J$-holomorphic, and each line $H \in \cA$ is a $J$-holomorphic line (see \cite{sta}).
Let us fix the line $H_{0} \in \cA$ and take $p \in H_{0} \setminus \Sing_{0} (\cA) $.
Also, we take a generic $J$-holomorphic line $L$ such that $L \notin \cA$.
It is known that for two distinct points $p,q \in \bC P^2$, there exists a unique $J$-holomorphic line $L_{p,q}$ passing through $p$ and $q$. 
Moreover, two distinct $J$-holomorphic lines intersect transversally at a single point.
Define a projection by $\pi: \bC P^2 \setminus \{p\} \rightarrow L \cong \bC P^1; \pi(q) = L_{p,q} \cap L$. 
As in the  usual braid-pencil construction for complex curves, it is known that the restriction $\pi: U(\cA) \setminus \pi^{-1} (X) \to \bC P^1 \setminus X$ is a locally trivial fibration whose fiber is a complex plane with $n$ points removed, 
where $X$ is the set of points $q$ such that $\pi^{-1}(q)$ coincides with $H_{0}$, $\pi^{-1}(q)$ contains a singularity of $\bigcup_{i=1}^{n} H_{i}$, or $\pi^{-1} (q)$ is tangent to $\bigcup_{i=1}^{n} H_{i}$.
Note that no tangencies occur since two distinct $J$-holomorphic lines must intersect transversally. 

Now, let us recall the presentation of the fundamental group of the complement.
Take a point $p_{0} \in \bC P^1 \setminus X$. Since the fiber $\pi^{-1}(p_0)$ is a complex plane with $n$ punctures, the fundamental group of the fiber is free of rank $n$. 
A presentation of the fundamental group of the complement is obtained from this free group by adding the relations corresponding to each degenerate fiber and braid monodromy around it. By the symplectic version of Libgober's argument summarized in Proposition \ref{prop:lib-hom}, the associated CW complex is homotopy equivalent to the complement.

In our case, the resulting connected $2$-dimensional CW complex consists of $n$ $1$-cells. Thus, the number of $2$-cells $c_{2}$ satisfies
\[
c_2 = \chi (U(\cA)) +n-1 = (|\mathbf{nbc} (\cA)|-n+1 ) +n-1 = |\mathbf{nbc}(\cA)| = b_2 (U(\cA)).
\]
This proves $U(\cA)$ is minimal.
\endproof

\subsection{Non-minimal arrangements and infinite families.}
In this subsection, we construct a topological line arrangement whose complement is not minimal. We will show the non-minimality by studying the fundamental group and Proposition \ref{prop:min_fund}.
Moreover, this idea enables us to produce an infinite family of arrangements with the same combinatorial type and pairwise non-homotopy equivalent complements.
Before proceeding to the construction of the examples, we prepare a key lemma.

\begin{lemma}\label{lem:2-knot}
For every positive integer $N$, there exists a $2$-knot $F_N \subset S^4$ such that 
$d(\pi_1 (S^4 \setminus F_N)) \geq N+1$. 
\end{lemma}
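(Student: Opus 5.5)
The plan is to build $F_N$ as a connected sum of $N$ copies of a single fixed $2$-knot $K$ whose knot group surjects onto a group of small rank with a useful finite quotient. Then $d(\pi_1)$ will be bounded below by the rank of an abelian quotient of the knot group of the sum. Since every knot group has abelianization $\bZ$, abelian invariants of the whole group carry no information. I will therefore pass to the commutator subgroup, or equivalently to a finite quotient, and estimate ranks there.

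The concrete choice is the spun trefoil, or more simply the $2$-twist spun trefoil $K=\tau_2(3_1)$. Its group is
\[
G=\langle x, y \mid xyx=yxy,\ x^2 y = y x^2 \rangle,
\]
which surjects onto $\bZ/3 \rtimes \bZ$; indeed the commutator subgroup of the twist spun trefoil is $\bZ/3$. Artin spinning also works: the spun trefoil group equals the classical trefoil group, which surjects onto $S_3$, and it is the usual route, so I would use either. Connected sum of $2$-knots gives the amalgamated product of the knot groups over the meridian subgroup $\bZ$. Hence
\[
\pi_1(S^4\setminus F_N) = G *_{\bZ} \cdots *_{\bZ} G,
\]
and it surjects onto the group obtained by identifying meridians. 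With the twist spun trefoil, the quotient sending each copy to $\bZ/3\rtimes\bZ$ with meridians identified gives $(\bZ/3)^N \rtimes \bZ$, where the generator $t$ acts by inversion on each factor. (The commutator subgroup of the amalgam is the free product of the commutator subgroups, so the $\bZ/3$ factors surject independently onto $(\bZ/3)^N$.)

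It remains to bound $d$ from below. Let $Q=(\bZ/3)^N\rtimes\bZ$. Then $d(\pi_1)\ge d(Q)$, and further $Q$ surjects onto $Q/\langle t^2\rangle = (\bZ/3)^N\rtimes \bZ/2$. Since $t$ acts by $-1$, I would compute the abelianization or, better, use a Frattini-type argument. Any generating set of $Q$ maps onto a generating set of the $\bF_3$-module quotient. Conjugation by $t$ acts as $-1$, so the subgroup generated by elements $v_i t^{k_i}$ has image in $(\bZ/3)^N$ spanned by the $v_i$ and the differences $v_i - v_j$. Hence at least $N$ generators with nontrivial $v$-part are needed, plus at least one generator with odd $t$-exponent to surject onto $\bZ$. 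A cleaner version: the elements with even $t$-exponent form an index-$2$ subgroup $(\bZ/3)^N\times 2\bZ$. If $g_1,\dots,g_m$ generate $Q$, then by Reidemeister--Schreier this subgroup is generated by $2m-1$ elements, which gives a weaker bound. So I will instead argue directly: modulo the normal subgroup generated by $t^2$ and the commutators, the $(\bZ/3)$-span of the $v$-parts, after absorbing one generator, has dimension at most $m-1$. This gives $m\ge N+1$.

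The main obstacle is making this rank estimate airtight, since ranks of groups are notoriously hard to bound from below. The fallback is the standard fact $d(A\rtimes C_2)\ge d_{\bF_3}(A)$ for the dihedral-type group when $t$ inverts, refined to $N+1$. Alternatively I can simply take $N+1$ summands and use $d\ge N+1$ coming from the surjection onto the elementary abelian group $(\bZ/3)^{N+1}$ of the commutator $H_1$ of the double cover. This is possible because the claim only needs rank at least $N+1$, so $F_{N+1}$ or more summands suffice.
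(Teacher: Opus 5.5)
Your route differs from the paper's and is sound in substance, but the decisive rank count needs to be written correctly. The paper takes $F_N$ to be the spin of a connected sum of $N$ arbitrary nontrivial classical knots. It cites Weidmann's theorem that such a product knot group has rank at least $N+1$, and Artin's result that spinning preserves the knot group. You instead map the group of a connected sum of $N$ (twist-)spun trefoils onto the explicit quotient $Q=(\bZ/3)^N\rtimes\bZ$. You then pass to $D=Q/\langle t^2\rangle=(\bZ/3)^N\rtimes\bZ/2$, which is legitimate since $t^2$ is central, and bound $d(D)$ by hand. The paper's argument is short and works for any nontrivial summands, but it rests on a deep theorem about ranks of amalgamated products. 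Yours is elementary and self-contained, at the cost of needing summands with a dihedral (Fox $3$-colouring) quotient; that is harmless for an existence statement. Since the trefoil group maps onto $\bZ/3\rtimes\bZ$ with a meridian going to $t$, your argument also verifies the bound directly for the paper's own example $\nu(\sharp_N 3_1)$.

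The rank estimate itself needs repair in three places.
\begin{enumerate}
\item Quotienting by commutators, as your ``direct'' sentence suggests, destroys the information. In additive notation $[v,t]=2v$, so $(\bZ/3)^N\subset[Q,Q]$, and $Q$ modulo $t^2$ and commutators is just $\bZ/2$.
\item The count ``$N$ generators with nontrivial $v$-part plus one with odd $t$-exponent'' does not by itself give $N+1$, since these sets may overlap. The correct count takes place in $D$ alone. If $g_1,\dots,g_m$ generate $D$, some $g_1$ has odd exponent. Replacing each other odd-exponent $g_j$ by $g_1g_j$ puts $g_2,\dots,g_m$ in $A=(\bZ/3)^N$ without changing the generated subgroup. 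Since $g_1$ is an involution acting on $A$ by $-1$, it normalizes $W=\langle g_2,\dots,g_m\rangle$. Hence $\langle g_1,\dots,g_m\rangle=W\cup g_1W$ meets $A$ exactly in $W$. This forces $N=\dim_{\mathbb{F}_3}W\le m-1$, and therefore $d(\pi_1(S^4\setminus F_N))\ge d(D)\ge N+1$.
\item Your fallback via the double cover is not valid as stated. A rank bound on an index-$2$ subgroup passes to the whole group only through Reidemeister--Schreier. With $N+1$ summands that gives merely $2m-1\ge N+2$, i.e.\ $m\ge (N+3)/2$. You would need about $2N$ summands, or the Alexander-module form of the Nielsen argument above.
\end{enumerate}
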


\proof 
Let $K$ be a connected sum of $N$ non-trivial knots. By \cite[Theorem 2]{wei}, its knot group has rank at least $N+1$. The spun knot $F_{N} :=\nu (K)$ has the same knot group \cite{artin} (see also \cite[Lemma 6.2.2]{kam}). 
\endproof

As one of the examples of such $F_N$, we take the spun $\nu(\sharp_{N} 3_1)$ of connected sums of $N$ trefoils.
It is known that the fundamental group of the complement of $F_{N}$ has the following Wirtinger presentation:
\[
\pi_1 (S^4 \setminus F_N) 
= \langle 
a_i, b_i, c_i
\mid 
c_i = a_i b_i a_i^{-1},
b_i = c_i a_i c_i^{-1},
a_i = b_i c_i b_i^{-1},
c_i = a_{i+1}
\rangle,
\]
where $i=1,\dots,N$.

Now, we are ready to construct topological line arrangements with non-minimal complement.

\vspace{3mm}

\textit{Proof of Theorem \ref{thm:main_nonmin}}. 
Let $\cA = \{H_0, \dots, H_n\}$ be a topological line arrangement.
Let $G = \pi_1 (U(\cA))$ and give a presentation 
\[
G = \langle x_1, \dots, x_m \mid r_1, \dots, r_s\rangle.
\]
We can assume that the meridian around some line is contained in the generators (if any meridian does not appear, then add a meridian to the generators). 
Let $x_{m} = m_{k}$ the meridian around $H_{k}$ ($k \in \{1,\dots, n\}$). 
The abelianization $G^{ab} \cong H_1 (U(\cA); \bZ) \cong \bZ^{n}$ is freely generated by the meridians $m_1, \dots, m_n$, and thus expressed as 
\[
G^{ab} = \langle m_1, \dots, m_{k}=x_{m} , \dots, m_{n} \mid [m_i, m_j]\rangle.
\]
Now, by taking the connected sum of $H_{k}$ and the spun $F_n$ of connected sums of $n$ trefoils, we obtain a new topological line arrangement $\cA' = \{H_0,\dots, H_k \sharp F_{n},\dots, H_n\}$. Let $G' = \pi_1 (U(\cA'))$. By the Seifert-van Kampen theorem, we have the following presentation for $G'$.
\[
G' = \langle x_1, \dots, x_m, a_1, \dots, c_n 
\mid r_1, \dots, r_s, x_m = a_1, \mbox{(Relations of $\pi_1 (S^4 \setminus F_n)$)} \rangle.
\]
By using the abelianization map, this group admits a surjective homomorphism to the group $G''$ defined by 
\[
G'' = \langle m_1, \dots ,m_n,  a_1, \dots, c_n 
\mid [m_i, m_j], m_k = a_1, \mbox{(Relations of $\pi_1 (S^4 \setminus F_n)$)} \rangle.
\]
Finally, let us define the homomorphism $\varphi: G'' \to \pi_1 (S^4 \setminus F_n)$ by $\varphi (m_{i}) = 1$ ($i \neq k$), $\varphi (m_k) = a_{1}$, and sending $a_i,b_i,c_i$ to the same elements. Then $\varphi$ is surjective.
Since $d(\pi_1 (S^4 \setminus F_n)) \geq n+1$, we have $d(G') \geq n+1$.
Thus by Proposition \ref{prop:min_fund}, $U(\cA')$ is not minimal.
\endproof

We can use this construction to produce an infinite family of arrangements whose combinatorial types are isomorphic and the complements are pairwise non-homotopy equivalent.

\vspace{3mm}

\textit{Proof of Theorem \ref{thm:main_infty}}.
Let $\cA = \{H_0,\dots, H_n\}$ be a topological line arrangement. As in the proof of Theorem \ref{thm:main_nonmin}, we can construct a topological line arrangement $\cA'_{(N)}$ such that $d (\pi_1 (U(\cA'_{(N)}))) \geq N+1$ by taking a connected sum of one component with $F_{N}$ for any $N \in \bZ_{>0}$. 
Thus, it follows that 
\[
\sup\{d(\pi_1 (U(\cA'))) \mid L(\cA) \cong L(\cA') \} = \infty,
\]
for any $\cA$. 
%Thus, while keeping the combinatorial type of the arrangement fixed, the rank of the fundamental group of its complement can be made arbitrarily large. 
Since the ranks are unbounded, we may inductively choose a sequence of such realizations whose fundamental groups have strictly increasing ranks. 
Consequently, one can construct infinitely many arrangements with the same combinatorial type whose complements have pairwise distinct homotopy types.
\endproof

This idea enables us to show that any $2$-knot complement appears as the complement of a topological line arrangement.

\begin{theorem}\label{thm:main_knot}
Let $F$ be any $2$-knot. Then, there exists a topological line arrangement $\cA_{F}$ such that $U(\cA_{F})$ is diffeomorphic to $S^4 \setminus F$.
\end{theorem}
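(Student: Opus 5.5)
We prove Theorem \ref{thm:main_knot} with the simplest possible arrangement: two topological lines. Take two algebraic lines $H_0, H_1 \subset \bC P^2$ meeting transversally and positively at one point $P$. Their complement $U(\{H_0,H_1\})$ is $\bC P^2$ minus two lines, which is $\bC^2$ minus a line, i.e. $\bC \times \bC^{*}$. Equivalently, it is $S^4$ minus an unknotted $2$-sphere together with a point.

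To remove the extra point, we need a cleaner description of the complement. The key observation is that the complement of the two lines in $\bC P^2$ is diffeomorphic to the complement of the trivial $2$-knot in $S^4$. Indeed, $\bC P^2 \setminus H_0 = \bC^2$. The closure of the other line in $S^4 = \bC^2 \cup \{\infty\}$ is $H_1 \setminus \{P\} \cup \{\infty\}$, which is an unknotted $2$-sphere $U$. Hence
\[
U(\{H_0,H_1\}) = \bC^2 \setminus (H_1\cap \bC^2) = S^4 \setminus U,
\]
and this is an honest identification of open manifolds. Concretely, if $H_1 \cap \bC^2 = \{z_2 = 0\}$, its one-point compactification is the standard unknotted $S^2 \subset S^4$.

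Next, form $\cA_F = \{H_0, H_1 \sharp F\}$. Here the connected sum is taken in a small $4$-ball $B$ disjoint from $H_0$ and from a neighborhood of $P$, exactly as in the proofs of Theorems \ref{thm:main_nonmin} and \ref{thm:main_infty}. We then verify the axioms for a topological line arrangement. The new sphere is locally flat (smooth if $F$ is), and it still represents $[\bC P^1]$, since $F$ is null-homologous in $S^4$. Its intersection with $H_0$ is unchanged: it is still the single transverse positive point $P$. Thus $\cA_F$ is a topological line arrangement with the combinatorial type of two lines.

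Under the identification above, the connected sum takes place inside $\bC^2 \subset S^4$. It is the connected sum of the pair $(S^4, U)$ with $(S^4, F)$, performed away from $\infty$. Therefore
\[
U(\cA_F) = S^4 \setminus (U \sharp F) \cong S^4 \setminus F,
\]
because $U$ is the trivial $2$-knot and the unknot is the identity for connected sum of $2$-knots.

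The main obstacle is making the step ``$(S^4,U)\sharp(S^4,F) \cong (S^4,F)$ as pairs'' precise, together with the identification of $\bC^2 \setminus \{z_2=0\}$ with the complement of the standard unknot. Both are standard once we note the following. The pair $(\bC^2 \cup\{\infty\}, \overline{\{z_2=0\}})$ is the standard $(S^4, S^2)$, since it is the suspension-type unknot bounding a $3$-ball, namely the compactified half-space $\{z_2 \in \bR_{\ge 0}\}$. The connected sum of pairs with the trivial knot is then well defined up to diffeomorphism and returns $(S^4,F)$.

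One care point: the connected-sum region must avoid the point at infinity (that is, avoid $H_0$) and the point $P$. This is ensured by choosing $B$ in a compact part of $\bC^2$ meeting $H_1$ in a trivial disk pair. Under this choice the resulting complement is diffeomorphic to $S^4 \setminus F$, as claimed.
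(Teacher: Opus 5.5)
Your proposal is correct and follows the paper's proof. You take two lines whose complement is the trivial $2$-knot complement and replace $H_1$ by $H_1\sharp F$ away from $H_0$; your identification $\bC P^2\setminus H_0=\bC^2\subset S^4$ turns this into $(S^4,U)\sharp(S^4,F)\cong(S^4,F)$, which makes explicit a step the paper only asserts. The one slip is the early remark that the complement is $S^4$ minus an unknot ``together with a point'': since $\infty\in U$ there is no extra point, as your own identification $U(\{H_0,H_1\})=S^4\setminus U$ shows.
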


\proof
Let $\cA = \{H_0, H_1\}$ be a complex algebraic line arrangement of two lines. 
Then, it is easy to see that the complement $U(\cA)$ is diffeomorphic to $\bC^{*} \times \bC \cong S^1 \times \bR^3$, which is also diffeomorphic to a trivial $2$-knot complement.
Let $H'_1 = H_1 \sharp F$ be the connected sum of $H_1$ and $F$ and we set $\cA_{F} =\{H_0, H_1'\}$. Then, $U(\cA_{F})$ is diffeomorphic to $S^4 \setminus F$.
\endproof 

\subsection{Relationship with $2$-pseudo arrangements}
The notion of $2$-pseudoarrangements, introduced by Bj\"orner and Ziegler \cite{bjo-zie}, is also a topological generalization of complex hyperplane arrangements. 
A $2$-pseudoarrangement is an arrangement of $(2d-3)$-spheres in $S^{2d-1}$ with appropriate conditions, see \cite[Definition 8.1 and 8.3]{bjo-zie} for precise definition. 
Topological line arrangements, which are considered in our paper, are related to $2$-pseudoarrangements in $S^5$.
In fact, if each $3$-sphere in a $2$-pseudoarrangement in $S^5$ is invariant under the natural $S^1$-action, 
then it induces an arrangement of $2$-spheres in $\bC P^2$ via the Hopf fibration.
By condition (i) in \cite[Definition 8.3]{bjo-zie}, the intersection of any two $3$-spheres is a circle. Thus, the induced arrangement of $2$-spheres in $\bC P^2$ satisfies the condition that any two $2$-spheres intersect in exactly one point.
Therefore, by giving an appropriate orientation, the resulting arrangement is a topological line arrangement.

Conversely, from a topological line arrangement in $\bC P^2$, we obtain an arrangement of $3$-spheres in $S^5$ by pulling back the Hopf fibration. 
Here, we call such an arrangement of $3$-spheres in $S^5$ the \textit{lifted topological arrangement}. 

It is natural to ask whether any lifted topological arrangement is a $2$-pseudo arrangement. However, the class of lifted topological arrangements is strictly larger than $2$-pseudoarrangements. 

\begin{theorem}\label{thm:lifted}
There exists a lifted topological arrangement which is not a $2$-pseudoarrangement.
\end{theorem}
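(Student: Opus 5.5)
We will use the knot-connecting-sum construction of Theorem \ref{thm:main_nonmin} together with a known topological property of $2$-pseudoarrangements. The key input is the Bj\"orner--Ziegler result that the complement of a $2$-pseudoarrangement has a combinatorially determined homotopy type. Concretely, Bj\"orner and Ziegler build a regular cell complex, the Salvetti-type complex $\mathcal{S}$ coming from the oriented-matroid-like stratification, whose homotopy type and cell structure depend only on the underlying combinatorics of the $2$-pseudoarrangement. In particular, $\pi_1$ of the complement admits a finite presentation read off from the combinatorial data. The fundamental group of the complement of a $2$-pseudoarrangement in $S^5$ is therefore determined by its combinatorial type, and we will argue that it has bounded rank. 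It is even enough to know that the fundamental group is a combinatorial invariant, so that it takes only finitely many values for a fixed matroid.

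The plan has three steps. First, fix a topological line arrangement $\cA$, for instance the algebraic arrangement $\{H_0,H_1,H_2\}$ in general position, or any arrangement with at least two lines. Second, for a lifted arrangement $\tilde\cA \subset S^5$, relate complements: the Hopf fibration restricts to a principal $S^1$-bundle $S^5\setminus \bigcup \tilde H_i \to U(\cA)$, which is trivial because the arrangement has at least one line. Indeed, the Euler class restricts to zero on $U(\cA)$, since the hyperplane class is carried by a line $H_0$ whose complement is $\bC^2$-like in degree two; more precisely, $c_1$ pulled back to $U(\cA)$ is the image of $[H_0]^*$, which vanishes. Hence the complement of $\tilde \cA$ is homotopy equivalent to $U(\cA)\times S^1$, and $\pi_1$ of the lifted complement is $\pi_1(U(\cA))\times \bZ$. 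Third, apply the proof of Theorem \ref{thm:main_infty}: replacing one line by $H_k\sharp F_N$ gives arrangements $\cA'_{(N)}$ with $L(\cA'_{(N)})\cong L(\cA)$ and $d(\pi_1 U(\cA'_{(N)}))\ge N+1$. Their lifts then have fundamental groups of rank at least $N+1$, because $\pi_1 U$ is a quotient of $\pi_1 U\times\bZ$. By contrast, if all these lifts were $2$-pseudoarrangements, they would share a combinatorial type, because the intersection poset of the lift is that of $\cA$ with the circle/sphere strata. By Bj\"orner--Ziegler, their complements would then have isomorphic fundamental groups of fixed finite rank. This is a contradiction for $N$ large, so some lift is not a $2$-pseudoarrangement.

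The main obstacle is the second half of the third step: making precise that the combinatorial data of a $2$-pseudoarrangement (as in \cite[Definition 8.3]{bjo-zie}, which includes a cell decomposition into sign-vector strata) is the same for all the lifts $\tilde\cA'_{(N)}$. The combinatorial type there is finer than the intersection lattice, since it is an oriented-matroid-type stratification. I would handle this by noting that only finitely many such finer combinatorial types have a given intersection poset, so the ranks of complement groups of $2$-pseudoarrangements with a given poset are uniformly bounded. A cleaner alternative, if available, is to use that Bj\"orner--Ziegler complements are homotopy equivalent to a complex with the minimal number of cells, or at least with $\pi_1$ generated by meridians, giving $d\le n+1$ directly. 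A second, smaller check is the triviality of the $S^1$-bundle over $U(\cA)$; if it fails, the Gysin/homotopy sequence still shows that $\pi_1U(\cA)$ is a quotient of the lifted group, which suffices.
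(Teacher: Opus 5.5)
Your proposal is correct, but it takes a different route from the paper.

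\textbf{The paper's route.} It uses one explicit example. By Theorem \ref{thm:main_knot} it takes $\cA'=\{H_0,H_1'\}$ whose complement has a non-abelian $2$-knot group $G$. It uses $H^2(U(\cA'))=0$ to get $\pi_1$ of the lifted complement $\cong G\times\bZ$. A real frame would then be four pseudospheres in $S^5$ meeting in a circle. Their oriented matroid is forced to agree with that of the coordinate frame $\{x_0=0\},\{y_0=0\},\{x_1=0\},\{y_1=0\}$. So by \cite[Theorem 5.1.6]{BLVSWZ} the complement would be homeomorphic to the complement of two coordinate $3$-spheres, whose $\pi_1\cong\bZ^2$ is abelian.

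\textbf{Your route.} You feed in the whole family $\cA'_{(N)}$ from Theorem \ref{thm:main_infty}. You then play unbounded $d(\pi_1)$ against a uniform bound for $2$-pseudoarrangements with $n+1$ elements.

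\textbf{Comparison.} The paper needs only the uniqueness of one small oriented matroid and an abelian versus non-abelian comparison, with no counting. Yours needs the uniform bound, but it proves more. For \emph{every} combinatorial type with at least two lines, it gives lifted arrangements of that type that are not $2$-pseudoarrangements. Your bundle-triviality argument is also valid for arbitrary arrangements: the Euler class restricted to $U(\cA)$ factors through $H^2(\bC P^2\setminus H_0)=0$. By contrast, $H^2(U(\cA))=0$ holds only for two lines.

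\textbf{Making the bound self-contained.} Choose $\cA$ not a pencil, so the real frame of $2(n+1)$ pseudospheres is essential. It then induces a regular cell decomposition of $S^5$ with at most $3^{2(n+1)}$ cells (Folkman--Lawrence), and the union of the $3$-spheres is a subcomplex. The complement deformation retracts onto the order complex of the remaining cells. Hence $d(\pi_1)$ is bounded by a function of $n$ alone.

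\textbf{What to drop.} Remove the intermediate claim that all the lifts ``share a combinatorial type''. The frame's oriented matroid is finer than $L(\cA)$ and need not be the same for different $N$; your finiteness statement is the correct replacement. Also remove the ``cleaner alternatives''. They are not needed and are not justified as stated; in particular, normal generation by meridians gives no bound on $d(\pi_1)$.
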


\proof
Let $G$ be a non-abelian $2$-knot group (for example, the group of the spun trefoil).
By Theorem \ref{thm:main_knot}, there exists a topological line arrangement $\cA' = \{H_0, H'_1\}$ such that $\pi_1 (U(\cA')) = G$.
Let $\overline{\cA'} = \{\overline{H_0}, \overline{H'_1} \}$ the corresponding lifted arrangement with the complement $U(\overline{\cA'}) = S^5 \setminus (\overline{H_0} \cup \overline{H'_1})$. 
Since $H^2 (U(\cA')) =0$, the restriction of the Hopf fibration to $U(\cA')$ is trivial. Thus, it follows that $U(\overline{\cA'}) \cong U(\cA') \times S^1$.
In particular, we have $\pi_1 (U(\overline{\cA'})) \cong G \times \bZ$.
Since $G$ is non-abelian, so is $\pi_1 (U(\overline{\cA'}))$.

Suppose that $\overline{\cA'}$ is a $2$-pseudoarrangement in $S^5$. 
By the condition (ii) in \cite[Definition 8.3]{bjo-zie}, there exists an arrangement $\cB'=\{T_0, T_1, U_0, U_1\}$ of $4$-spheres in $S^5$ such that $H_0= T_0 \cap U_0$ and $H'_1 = T_1 \cap U_1$ as the real frame. 
We regard the real frame as a signed arrangement by choosing a positive side for each pseudosphere. 
Let $(z_0, z_1, z_2)$ be complex coordinates on $S^5$ and let $z_j = x_{j} + \sqrt{-1}y_{j}$. Then, by defining $C_{j} = \{x_j = 0\}, D_{j} = \{y_j = 0\}$, we have an arrangement of pseudospheres $\cC = \{C_0, C_1, D_0, D_1\}$, whose combinatorial structure is isomorphic to $\cB'$. Thus, by \cite[Theorem 5.1.6]{BLVSWZ}, the pseudosphere arrangements $\cB'$ and $\cC$ are topologically equivalent; in particular, there exists a homeomorphism $h: S^5 \to S^5$ such that $h(T_j) = C_j$ and $h(U_j) =D_j$ ($j=0,1$). Thus, the two complements of $2$-pseudoarrangements $U(\overline{\cA'})$ and $S^5 \setminus \bigcup_{j=0,1} (C_j \cap D_j)$ are homeomorphic.
It is easy to see that the fundamental group of the complement of two complex coordinate $3$-spheres is isomorphic to $\bZ \times \bZ$. 
However, this contradicts that $\pi_1 (U(\overline{\cA'}))$ is non-abelian.
\endproof

\section{Appendix: Intersection ring and intersection dual}\label{sec:app}

%\subsection{Intersection ring and intersection dual}\label{subsec:ring}
Here, we recall the basic facts on the cohomology ring and its dual object, the homology intersection ring, see \cite[Chapter VI]{bre} for details.
Let $M$ be a compact oriented $n$-manifold with boundary. Recall the Poincar\'e-Lefschetz duality:
\[
D: H^{k} (M) \cong H_{n-k} (M, \bd M); \alpha \mapsto D(\alpha) = \alpha \cap [M, \bd M].
\]
The homology intersection product is defined as 
\[
\cdot: H_{k_1} (M, \bd M) \times H_{k_2} (M, \bd M) \to H_{k_1+k_2 - n} (M, \bd M);
(a,b) \mapsto D(D^{-1}(a) \cup D^{-1} (b)).
\] 
By the Poincar\'e-Lefschetz duality, $H_{n-*}(M, \bd M)$ becomes a graded algebra which is isomorphic to the cohomology ring. 
The intersection product is computed geometrically. Suppose that the homology class $a_{i} \in H_{k_1} (M, \bd M)$ is represented by oriented submanifolds $X_i$ of $M$ intersecting transversally ($i=1,2$). 
Then, the intersection product satisfies $a_1 \cdot a_2 = [X_1 \cap X_2]$. Note that $a_2 \cdot a_1 = [X_2 \cap X_1] = [(-1)^{(n-k_1)(n-k_2)} X_1 \cap X_2]= (-1)^{(n-k_1)(n-k_2)} a_1 \cdot a_2$. 

Now, assume that $H_{k}(M)$ is a free module for each $k \geq 0$. Then, by the Poincar\'e-Lefschetz duality and universal coefficient theorem, we have
\[
(H_{k} (M))^{*} \cong H_{n-k} (M, \bd M).
\]
Thus, the relative homology group $H_{n-k} (M, \bd M)$ is the dual of $H_{k} (M)$.
Let $\{x_1, \dots, x_l\}$ be a basis of $H_k (M)$ with the dual basis $\{x_1^{\vee}, \dots , x_{l}^{\vee}\}$. 
Using the duality above, this dual basis is taken such that $x_{i}^{\vee} \cdot x_{j} = \delta_{i,j} $ via the intersection product
\[
\cdot :  H_{n-k} (M, \bd M) \times H_{k} (M) \to \bZ; (x_{i}^{\vee}, x_j ) \mapsto \delta_{i,j}.
\] 
We call this dual cycle the \textit{intersection dual} of $x_{i}$.

\end{document}